\newtheorem{thm}{Theorem}[section]
\newenvironment{thmbis}[1]
 { \addtocounter{thm}{-1} \begin{thm}}
 {\end{thm}}
\newtheorem{conjecture}[thm]{Conjecture}
\newtheorem{prop}[thm]{Proposition}
\newenvironment{propbis}[1]
 { \addtocounter{thm}{-1} \begin{prop}}
 {\end{prop}}
\newtheorem{clm}[thm]{Claim}
\newcommand\ex{\ensuremath{\mathrm{ex}}}
\newcommand\cA{{\mathcal A}}
\newcommand\cF{{\mathcal F}}
\newtheorem*{thm*}{Theorem}
\newtheorem*{prop*}{Proposition}
\newcommand{\ignore}[1]{}
\title{On hypergraph Tur\'an problems with bounded matching number}
\author{D\'{a}niel Gerbner\thanks{\small Alfr\'ed R\'enyi Institute of Mathematics, email:
\small \texttt{gerbner.daniel@renyi.hu}.},\hspace{1em}  Casey Tompkins\thanks{\small Alfr\'ed R\'enyi Institute of Mathematics, email:
\small \texttt{tompkins.casey@renyi.hu}.},\hspace{1em}
Junpeng Zhou\thanks{\small Department of Mathematics, Shanghai University, Shanghai 200444, P.R. China, email:
\small \texttt{junpengzhou@shu.edu.cn}.}}
\date{}
\begin{document}

\maketitle

\begin{abstract}
Very recently, Alon and Frankl, and Gerbner studied the maximum number of edges in $n$-vertex $F$-free graphs with bounded matching number, respectively. We consider the analogous Tur\'{a}n problems on hypergraphs with bounded matching number, and we obtain some exact results.
\end{abstract}

{\noindent{\bf Keywords}: Tur\'{a}n number, hypergraph, matching number}

{\noindent{\bf AMS subject classifications:} 05C35, 05C65}

\section{Introduction}
A \textit{hypergraph} $H=(V(H),E(H))$ consists of a vertex set $V(H)$ and a hyperedge set $E(H)$, where each hyperedge in $E(H)$ is a nonempty subset of $V(H)$.
If $|e|=r$ for any $e\in E(H)$, then $H$ is called an \textit{$r$-uniform hypergraph} ($r$-graph for short).
The \textit{degree} $d_H(v)$ (or simply $d(v)$ when there is no risk of confusion) of a vertex $v$  is the number of hyperedges containing $v$ in $H$.

Given a family $\mathcal{F}$ of $r$-graphs, a hypergraph $H$ is called \textit{$\mathcal{F}$-free} if $H$ does not contain any member of $\mathcal{F}$ as a subhypergraph.
The \textit{Tur\'{a}n number} ${\rm{ex}}_r(n,\mathcal{F})$ of $\mathcal{F}$ is the maximum number of hyperedges in an $\mathcal{F}$-free $r$-graph on $n$ vertices. If $\mathcal{F}=\{F\}$, then we write ${\rm{ex}}_r(n,F)$ instead of ${\rm{ex}}_r(n,\{F\})$. When $r=2$ we sometimes write ${\rm{ex}}(n,\mathcal{F})$ in place of ${\rm{ex}}_2(n,\mathcal{F})$.

Tur\'{a}n problems on graphs and hypergraphs are central topics in extremal combinatorics~\cite{Fu2}.
For some surveys on Tur\'{a}n problems on graphs and hypergraphs, one can refer to \cite{Fu1,Fu2,Ke1}.

Let $G(n,\ell,s)$ denote the complete $\ell$-partite graph on $n$ vertices with one part of order $n-s$ and each other part of order $\lfloor\frac{s}{\ell-1}\rfloor$ or $\lceil\frac{s}{\ell-1}\rceil$.
Recently, Alon and Frankl~\cite{af} considered Tur\'{a}n problems on graphs with bounded matching number.
For $s\ge s_0(F)$ and $n\ge n_0(F)$, they showed that
\begin{equation} \label{AlonFrankleq}
{\rm{ex}}(n,\{M_{s+1},F\})=|E(G(n,\ell,s))|,
\end{equation}
where $F$ is an arbitrary color-critical graph of chromatic number $\ell+1$. 
In~\cite{gerbner}, Gerbner generalized~\eqref{AlonFrankleq} as follows.

\begin{thm}\label{thm1}(\cite{gerbner})
If $\chi(F)>2$ and $n$ is sufficiently large, then $\ex(n,\{F,M_{s+1}\})=\ex(s,\cF)+s(n-s)$, where $\cF$ is the family of graphs obtained by deleting an independent set from $F$.
\end{thm}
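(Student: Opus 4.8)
The plan is to prove matching lower and upper bounds. For the lower bound I would exhibit the natural extremal graph $G_0$: take a set $S$ of $s$ vertices carrying an $\cF$-free graph with $\ex(s,\cF)$ edges, join every vertex of $S$ to each of the remaining $n-s$ vertices, and leave those $n-s$ vertices independent. Every edge meets $S$, so $\nu(G_0)\le s$ and $G_0$ is $M_{s+1}$-free; and any copy of $F$ would meet the independent part in a set $I'$ that is independent in $F$, forcing $F-I'\in\cF$ into $G_0[S]$, which is impossible. Hence $e(G_0)=\ex(s,\cF)+s(n-s)$, giving the lower bound.

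For the upper bound, let $G$ be $\{F,M_{s+1}\}$-free on $n$ vertices; I may assume $e(G)\ge\ex(s,\cF)+s(n-s)$, as otherwise there is nothing to prove. I would fix a maximum matching, let $W$ be its $2t\le 2s$ endpoints (a vertex cover) and $I=V\setminus W$ the resulting independent set, and set $S=\{w\in W:|N(w)\cap I|\ge s+1\}$. A greedy matching of $s+1$ vertices of $S$ into $I$ would produce $M_{s+1}$, so $|S|\le s$. Writing $e(G)=e(G[S])+e(S,V\setminus S)+e(G[V\setminus S])$ and using $e(S,V\setminus S)\le|S|(n-|S|)$, together with the crude bounds $e(G[S])\le\binom{s}{2}$ and $e(G[V\setminus S])=O_s(1)$ (every edge of $G[V\setminus S]$ meets the at most $2s$ vertices of $W\setminus S$, since $I$ is independent, and each such vertex has at most $s$ neighbours in $I$), the assumption forces $|S|=s$ and bounds the total defect $m:=s(n-s)-e(S,V\setminus S)=\sum_{u\notin S}(s-|N_S(u)|)$ by a constant. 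Consequently the set $I^{*}$ of vertices of $I$ adjacent to all of $S$ satisfies $|I^{*}|\ge |I|-m\ge |V(F)|$ once $n$ is large.

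The two remaining points are where the real work lies. First, $I^{*}$ is a large independent set each of whose vertices is complete to $S$, so if $G[S]$ contained some $F-I'\in\cF$ I could plant the independent set $I'$ on $|I'|\le|V(F)|\le|I^{*}|$ distinct vertices of $I^{*}$ and recover a copy of $F$ in $G$; hence $G[S]$ is $\cF$-free and $e(G[S])\le\ex(s,\cF)$. Second, I claim $G[V\setminus S]$ has no edges at all: any such edge $wz$ has an endpoint $w\in W\setminus S$, and I can match the $s$ vertices of $S$ to $s$ distinct vertices of $I^{*}\setminus\{z\}$ and adjoin $wz$, producing a matching of size $s+1$, a contradiction. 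Combining, $e(G)=e(G[S])+e(S,V\setminus S)\le\ex(s,\cF)+s(n-s)$.

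I expect the delicate step to be the exact accounting rather than the leading order: it is routine to bound $e(G)$ by $s(n-s)+O_s(1)$, but removing the $O_s(1)$ error and matching the constant $\ex(s,\cF)$ exactly is the crux, and it hinges on first showing that $I^{*}$ is large and then using it twice, once to force $\cF$-freeness of $G[S]$ via the planting argument and once to kill every edge of $G[V\setminus S]$ via an augmenting matching. A minor separate check is that the case $|S|<s$ really is non-extremal (there one gains a linear surplus of order $n$ over the target), which disposes of it immediately.
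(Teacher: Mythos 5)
Your proof is correct. Note that this paper does not prove Theorem~\ref{thm1} directly---it is quoted from \cite{gerbner}---but it proves the hypergraph generalization, Theorem~\ref{thmnew1}, and specializing that proof to $r=2$ gives the natural comparison. Your lower-bound construction is identical to the paper's. For the upper bound both arguments share the same skeleton---locate $s$ vertices joined to almost all other vertices, show the remaining vertices span no edge via a greedy matching argument, and use the ``planting'' idea that an independent set complete to the core extends a member of $\cF$ to a copy of $F$---but the mechanics genuinely differ. The paper orders vertices by degree, shows $d(v_{s+1})=O(1)$, observes that the bound already holds unless $d(v_s)\ge n-O(1)$, and takes $U$ to be the $s$ vertices of largest degree; you instead extract $S$ from the vertex cover given by a maximum matching, threshold on degree into the independent complement $I$, and use the WLOG extremality assumption to force $|S|=s$ and a bounded defect, hence a large set $I^*$ complete to $S$. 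More substantively, the endgames differ: the paper never shows that the core induces an $\cF$-free graph; rather, if $H[U]$ contains some $F_0\in\cF$, then every potential extension set outside $U$ witnesses a missing crossing edge, and the resulting deficit of order $n$ swamps the constant surplus $\binom{s}{2}-\ex(s,\cF)$. You run this step as a direct contradiction: planting an independent set of $F$ on $I^*$ produces $F$ outright, so $G[S]$ is $\cF$-free and the count closes exactly, with a separate augmenting-matching argument killing every edge outside $S$. Your route is somewhat more elementary and self-contained for graphs (it needs no Tur\'an-number input for complete hypergraphs, which the paper uses to find cliques in links and which is vacuous when $r=2$); the paper's deficit-counting version has the advantages of not needing the extremality assumption and of extending verbatim to all uniformities $r$.
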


In the case $F$ is bipartite, Gerbner~\cite{gerbner} also determined ${\rm{ex}}(n,\{M_{s+1},F\})$ apart from an additive constant term. 
Given a bipartite graph $F$, let $p(F)$ denote the smallest possible order of a color class in a proper 2-coloring of $F$. Gerbner~\cite{gerbner} proved the following result.

\begin{prop}\label{prop2}(\cite{gerbner})
If $F$ is bipartite and $p=p(F)\le s$, then $\ex(n,\{F,M_{s+1}\})=(p-1)n+O(1)$. Moreover, there is an integer $K=K(F,s)$ such that for any $n$, there is an $n$-vertex $\{F,M_{s+1}\}$-free graph $G$ with $|E(G)|=\ex(n,\{F,M_{s+1}\})$ that has vertices $v_1,\dots,v_{p-1}$ and at least $n-K$ vertices $u$ such that the neighborhood of $u$ is $\{v_1,\dots,v_{p-1}\}$. 
Furthermore, the vertices with neighborhood different from $\{v_1,\dots,v_{p-1}\}$ each have degree at least $p$.
\end{prop}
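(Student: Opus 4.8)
The plan is to prove matching upper and lower bounds of the form $(p-1)n+O(1)$ and then to exhibit one extremal graph with the stated structure.

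For the lower bound and the construction, I would start from the complete bipartite graph $K_{p-1,\,n-p+1}$, with the two parts playing the roles of $\{v_1,\dots,v_{p-1}\}$ and of the remaining vertices. This graph is $F$-free: any copy of $F$ would inherit the bipartition, exhibiting a proper $2$-colouring of $F$ with a colour class of size at most $p-1$, contradicting $p(F)=p$. Its matching number is $p-1\le s$, so it is $M_{s+1}$-free, and it has $(p-1)(n-p+1)=(p-1)n-O(1)$ edges, already displaying the required skeleton: every vertex off the small side has neighbourhood exactly $\{v_1,\dots,v_{p-1}\}$. To pin down the exact extremal number I would then augment this graph by a gadget on a bounded vertex set, chosen to use up the remaining matching slack $s-(p-1)$ and to add as many edges as possible while staying $F$-free; since the gadget has $O(1)$ vertices it changes only the additive constant and affects only $O(1)$ vertices, which become the exceptional ones.

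For the upper bound, let $G$ be extremal, fix a maximum matching $M$, and set $W=V(M)$ (so $\abs{W}=2\nu(G)\le 2s$) and $I=V(G)\setminus W$, which is independent by maximality. The edges inside $W$ number at most $\binom{2s}{2}=O(1)$. Let $I_{\ge p}$ be the set of vertices of $I$ with at least $p$ neighbours in $W$. The key claim is $\abs{I_{\ge p}}=O(1)$: there are at most $2^{2s}$ possible neighbourhoods inside $W$, so if $\abs{I_{\ge p}}$ exceeded $2^{2s}\abs{V(F)}$ then some set $N_0\subseteq W$ with $\abs{N_0}\ge p$ would be the common neighbourhood of at least $\abs{V(F)}$ vertices of $I$; taking $p$ vertices of $N_0$ and $\abs{V(F)}-p$ of these common neighbours yields a copy of $K_{p,\,\abs{V(F)}-p}\supseteq F$, since the large (independent) side of $F$ sits inside the independent set $I$, contradicting $F$-freeness. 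Consequently the edges incident to $I_{\ge p}$ number $O(1)$, every other vertex of $I$ contributes at most $p-1$ edges, and summing gives $\abs{E(G)}\le(p-1)n+O(1)$, matching the lower bound.

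For the structural ``moreover'' I would take an extremal $G$ and perform the above decomposition. Extremality forces all but $O(1)$ vertices of $I$ to have degree exactly $p-1$, and since there are only $\binom{2s}{p-1}=O(1)$ possible $(p-1)$-neighbourhoods, one of them, say $\{v_1,\dots,v_{p-1}\}\subseteq W$, is shared by a positive fraction of them. I would then rebuild $G$: the core $W\cup I_{\ge p}$ has bounded size, and I would reattach every vertex outside the core so that its neighbourhood is exactly $\{v_1,\dots,v_{p-1}\}$. This does not decrease the number of edges, so the rebuilt graph is again extremal provided it stays $\{F,M_{s+1}\}$-free, and it then has the desired form, with the exceptional vertices confined to the bounded core; any exceptional vertex of degree at most $p-1$ can moreover be normalised to have neighbourhood exactly $\{v_1,\dots,v_{p-1}\}$, so we may assume the exceptional vertices have degree at least $p$. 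The main obstacle is controlling the matching number under this reattachment, since rerouting many vertices onto a single $(p-1)$-set could in principle create a matching of size $s+1$. To handle this I would use the augmenting-path observation that for each edge $xy\in M$ at most one of $x,y$ can have neighbours in $I$ (otherwise $M$ extends), so that the vertices of $W$ meeting $I$ behave like a cover of size at most $s$; choosing $\{v_1,\dots,v_{p-1}\}$ compatibly with a Gallai--Edmonds analysis of $G$ should guarantee that the concentrated construction still has matching number at most $s$. Verifying that such a choice is always available, together with the bounded gadget optimisation that fixes the exact additive constant, is the delicate part of the argument.
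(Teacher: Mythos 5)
A preliminary remark: the paper does not prove this proposition at all --- it is quoted from the reference \cite{gerbner} --- so your attempt cannot be compared against an in-paper argument and must be judged on its own merits. On those merits, your proof of the first assertion, $\ex(n,\{F,M_{s+1}\})=(p-1)n+O(1)$, is correct and complete: $K_{p-1,\,n-p+1}$ gives the lower bound, and the upper bound via a maximum matching $M$, the independent set $I=V(G)\setminus V(M)$, and the pigeonhole argument showing that only $O(1)$ vertices of $I$ can have $p$ or more neighbours in $V(M)$ (otherwise some $p$-set of $V(M)$ has at least $|V(F)|-p$ common neighbours in $I$, producing $K_{p,\,|V(F)|-p}\supseteq F$) is sound, as is the deduction that all but $O(1)$ vertices of $I$ have degree exactly $p-1$ in an extremal graph.

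The structural ``Moreover/Furthermore'' claims, however, are not proved, and the gaps you yourself flag as ``the delicate part'' are genuine. Concretely: (a) after reattaching every non-core vertex to $\{v_1,\dots,v_{p-1}\}$ you never verify that the rebuilt graph is still $F$-free, and (b) the matching-number control is deferred to an unspecified ``Gallai--Edmonds compatible'' choice of $\{v_1,\dots,v_{p-1}\}$ whose existence is not established. Both points can in fact be repaired more simply than you suggest, at least for large $n$: since a positive fraction of the vertices of the extremal $G$ already have neighbourhood exactly $\{v_1,\dots,v_{p-1}\}$, any copy of $F$ (or any matching of size $s+1$) in the rebuilt graph can be pulled back to one in $G$ itself, by swapping each reattached vertex it uses for a distinct unused original vertex with that same neighbourhood --- only $|V(F)|$ (respectively at most $p-1$) swaps are needed and $\Omega(n)$ candidates are available --- contradicting that $G$ is $\{F,M_{s+1}\}$-free; no Gallai--Edmonds analysis is required. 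Even with this repair, two items remain open in your write-up: the proposition asserts the structure \emph{for any} $n$, whereas the positive-fraction and replacement arguments only operate once $n$ is large relative to $F$ and $s$; and the ``furthermore'' claim that every exceptional vertex has degree at least $p$ rests on a final normalisation step whose admissibility (preservation of $F$-freeness and of the matching bound) is again exactly what was left unverified. In short: the asymptotics are established, the extremal structure is not.
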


Hypergraph Tur\'{a}n problems are notoriously more difficult than graph versions.
In 1965, Erd\H{o}s \cite{Er1} proposed the Erd\H{o}s-Matching Conjecture on the Tur\'an number of $r$-uniform matchings.
It is known to hold for sufficiently large $n$, and we state a recent result due to Frankl~\cite{Fr1}.

\begin{thm}[\cite{Fr1}]\label{emc}
Let $r,s\geq1$ and $n\geq (2s+1)r-s$. Then $\ex_r(n,M_{s+1}^r)=\sum_{i=1}^s\binom{s}{i}\binom{n-s}{r-i}$. Equality holds only for families isomorphic to $\mathcal{A}(n,r,s)$, where $\mathcal{A}(n,r,s)$ is the $r$-graph such that each hyperedge intersects a fixed $s$-set. 
\end{thm}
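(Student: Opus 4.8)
The plan is to prove the Erdős–Matching Conjecture for the stated range $n \geq (2s+1)r - s$ via the delta-system (or "sunflower") method combined with a shifting argument, following the standard Frankl approach. First I would establish the extremal construction: the $r$-graph $\mathcal{A}(n,r,s)$ consisting of all $r$-sets meeting a fixed $s$-set $S$ has matching number exactly $s$ (any matching uses at most one vertex of $S$ per edge, hence at most $s$ edges), and a direct inclusion–exclusion count gives $|\mathcal{A}(n,r,s)| = \sum_{i=1}^{s}\binom{s}{i}\binom{n-s}{r-i}$, which serves as the lower bound. The bulk of the work is the matching upper bound together with the uniqueness claim.

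For the upper bound, the key technique is \emph{shifting} (compression): I would first observe that the shift operations $S_{ij}$ do not increase the matching number, so it suffices to bound $|E(H)|$ for a shifted (compressed) family $H$ that is $M_{s+1}^r$-free. The payoff of shifting is a structural dichotomy on shifted intersecting-type families: after full compression one can control how edges "cascade" toward a small coordinate set. The central step is then a careful counting or the Frankl–Kupavskii style argument showing that a shifted $r$-graph with no matching of size $s+1$ on $n \geq (2s+1)r - s$ vertices can contain at most $\sum_{i=1}^{s}\binom{s}{i}\binom{n-s}{r-i}$ edges. One clean route is induction on $s$: delete a maximum matching, use the defect version of the Hilton–Milner/Kruskal–Katona bounds on the link of a well-chosen vertex, and feed the remaining hypergraph (which now has matching number at most $s-1$ on fewer vertices) into the inductive hypothesis, checking that the arithmetic of the binomial sum telescopes correctly.

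The uniqueness statement requires tracking the equality cases through every inequality used above. After shifting, I would argue that equality forces the compressed family to be exactly $\mathcal{A}(n,r,s)$, and then verify that the shifting operations, when applied to an extremal family, cannot have genuinely altered the isomorphism type — i.e. that any family attaining the bound must already be a "star-like" family covered by a fixed $s$-set. This typically hinges on the stability of the Kruskal–Katona step: equality in the shadow bound pins down the link precisely, and one propagates this rigidity back across the shifts.

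The hard part will be the threshold $n \geq (2s+1)r - s$: the shifting and induction must be arranged so that the inductive hypothesis applies on the reduced instance, which shrinks both $n$ and $s$, and one must confirm the inequality $n' \geq (2s'+1)r - s'$ is preserved at each step. Equivalently, the delicate point is ensuring the defect-Kruskal–Katona or cascade estimate is tight exactly in this range and does not lose a lower-order term that would spoil the exact count or the uniqueness. Since this is Frankl's theorem quoted from \cite{Fr1}, I would ultimately defer the sharpest form of this counting lemma to that reference, using it as the engine for both the extremal value and the characterization of equality.
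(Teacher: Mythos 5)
The paper contains no proof of this statement to compare yours against: Theorem~\ref{emc} is Frankl's theorem on the Erd\H{o}s Matching Conjecture, quoted directly from \cite{Fr1} and used throughout as a black box (it supplies the upper bounds in Propositions~\ref{thmnew2}, \ref{thmnew3} and \ref{thmnew6}). So your sketch can only be evaluated as a standalone argument, and as such it has genuine gaps rather than being a proof.

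The lower-bound half is fine: $\mathcal{A}(n,r,s)$ has matching number at most $s$ (each edge of a matching contains at least one vertex of the fixed $s$-set --- note, not ``at most one vertex per edge'' as you wrote, since an edge may meet $S$ in several vertices) and exactly $\sum_{i=1}^s\binom{s}{i}\binom{n-s}{r-i}$ edges. But the entire content of the theorem is the upper bound and the equality characterization at the threshold $n\ge(2s+1)r-s$, and this is precisely what you do not supply: you name the standard toolkit (shifting, induction on $s$, link/shadow estimates) and then explicitly ``defer the sharpest form of this counting lemma'' to \cite{Fr1}, i.e., to the very theorem being proved, which is circular. Two concrete failures. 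First, the induction you outline (delete a maximum matching, apply a defect Kruskal--Katona or Hilton--Milner bound to a link, recurse on $s-1$) is not Frankl's argument and is not shown to close arithmetically; the range $n\ge(2s+1)r-s$ is exactly where naive inductions of this shape break down --- the conjecture remains open for smaller $n$, so any loss of a lower-order term destroys an exact result, and you give no reason the telescoping works. Second, uniqueness cannot be obtained by ``propagating rigidity back across the shifts'' without a real argument: shifting preserves the edge count and does not increase the matching number, so an extremal family remains extremal after compression, but shifting is many-to-one on isomorphism types, and knowing that the fully compressed family equals $\mathcal{A}(n,r,s)$ does not identify the original family. Reversing the compression on equality cases is the genuinely delicate step, and it is asserted rather than argued. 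If your intent is simply to cite Frankl --- which is what the paper does --- that is legitimate, but then the body of your sketch should not be presented as a proof outline that carries the load.
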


In this paper we will study Tur\'{a}n problems on hypergraphs with bounded matching number. 
Let us start with some definitions related to hypergraphs.
A \textit{weakly independent set} of a hypergraph $H$ is a subset of $V(H)$  that does not contain a hyperedge. 
A \textit{proper $k$-coloring} of $H$ is a mapping from $V(H)$ to a set of $k$ colors such that no hyperedge is monochromatic. The \textit{chromatic number} $\chi(H)$ of $H$ is the minimum colors needed for proper coloring $H$.
From now on, we use $F$ to denote a given $r$-graph.
First we prove the following hypergraph version of Theorem~\ref{thm1}.

\begin{thm}\label{thmnew1}
If $\chi(F)>2$ and $n$ is sufficiently large, then $\ex_r(n,\{F,M_{s+1}^r\})=\ex_r(s,\cF)+\sum_{i=1}^t\binom{s}{i}\binom{n-s}{r-i}$, where $t=\min\{s,r-1\}$ and $\cF$ is the family of $r$-graphs obtained by deleting a weakly independent set from $F$.
\end{thm}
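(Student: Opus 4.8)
The plan is to establish matching lower and upper bounds. For the lower bound I would fix a set $S$ of $s$ vertices and build $H^*$ by placing on $S$ a maximum $\cF$-free $r$-graph $G_0$ (contributing $\ex_r(s,\cF)$ edges) and then adding every $r$-set meeting $S$ in between $1$ and $r-1$ vertices, i.e.\ every $r$-set meeting both $S$ and its complement (contributing $\sum_{i=1}^t\binom{s}{i}\binom{n-s}{r-i}$ edges, with $t=\min\{s,r-1\}$). Since every hyperedge of $H^*$ meets $S$, a matching has at most $|S|=s$ edges, so $H^*$ is $M_{s+1}^r$-free. For $F$-freeness, in any embedding of $F$ the vertices landing outside $S$ form a weakly independent set $W$, because no hyperedge lies entirely outside $S$; deleting $W$ then exhibits a member of $\cF$ inside $S$, contradicting that $G_0=H^*[S]$ is $\cF$-free. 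The hypothesis $\chi(F)>2$ is what makes this meaningful: for any weakly independent $W$ the complement is not weakly independent, so $F-W$ still has an edge and $\cF$ consists of genuinely nonempty $r$-graphs.

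For the upper bound, let $H$ be extremal with $n$ large, so that $\nu(H)\le s$. First I would isolate the heavy vertices, calling $v$ heavy if $d(v)\ge cn^{r-1}$ for a suitable constant $c=c(F,r,s)$. A greedy argument bounds their number by $s$: given $s+1$ heavy vertices one selects pairwise disjoint edges, one through each, since at each step the edges through the next heavy vertex that meet the $O(1)$ already-used vertices number only $O(n^{r-2})\ll cn^{r-1}$, producing an $M_{s+1}^r$. Let $S$ be a set of exactly $s$ vertices containing all heavy vertices (if there are fewer than $s$, the relevant counts only decrease, leaving slack). I would then partition $E(H)$ into edges inside $S$, edges meeting both $S$ and its complement, and edges disjoint from $S$. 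The middle class is at most $\sum_{i=1}^t\binom{s}{i}\binom{n-s}{r-i}$, since these are simply all $r$-sets of that type, so it remains to show the first and third classes together contribute at most $\ex_r(s,\cF)$.

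Concretely I would prove (a) $H$ has no edge disjoint from $S$, and (b) $H[S]$ is $\cF$-free. For (b): if $H[S]$ contained a member $F-W$ of $\cF$ on a vertex set $X\subseteq S$, I would re-embed the deleted weakly independent set $W$ into $V\setminus S$ to complete a copy of $F$ in $H$; since $W$ is weakly independent, only the mixed hyperedges between $W$ and $X$ need to be realized, and the heaviness of every vertex of $X$ (each in $\Omega(n^{r-1})$ edges), with $n$ large, should let a greedy/counting embedding place the constantly many vertices of $W$, contradicting $F$-freeness. Statement (a) would follow from a similar exchange: an edge disjoint from $S$ either combines with edges through $S$ to exceed matching number $s$, or can be spent to extend an $H[S]$-copy of an $\cF$-member into a full $F$, and otherwise deleting it while adding a missing crossing edge would contradict extremality.

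The main obstacle is the embedding argument in (b), together with the analogous exchange in (a): because $W$ may meet $F$ in hyperedges using several vertices of $W$ at once, the vertices of $W$ cannot be placed independently, so one needs joint codegree control rather than single-vertex degree bounds. The point to push through is that every vertex of the copy $X\subseteq S$ is heavy, so the host of available crossing hyperedges is dense enough ($\Omega(n^{r-1})$ per vertex, with $F$ of constant size) for a standard embedding lemma to locate the $O(1)$ vertices of $W$; quantifying $c$ and the requisite codegree estimates so that the greedy selection always succeeds for all $n\ge n_0(F,s)$ is the technical heart of the proof.
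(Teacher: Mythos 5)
Your lower bound is exactly the paper's construction, argued the same way, and is correct. The upper bound, however, has a genuine gap precisely where you locate ``the technical heart,'' and it cannot be patched within your framework. Defining heavy vertices by $d(v)\ge cn^{r-1}$ for a small constant $c$ is too weak for both of your claims. For claim (b), knowing that every vertex of the copy $X\subseteq S$ of some $F_0=F-W\in\cF$ has degree $\Omega(n^{r-1})$ says nothing about the quantities you actually need: for each hyperedge $e$ of $F$ meeting $W$, you must complete the specific set $X_e$ (the image of $e\setminus W$) by vertices outside $S$, and the number of such completions is a \emph{codegree} of $X_e$, which can be zero even when every vertex of $X_e$ has degree $\Omega(n^{r-1})$. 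Moreover, no ``standard embedding lemma'' runs at constant positive density here: for $(r-1)$-uniform structures with $r\ge 3$, positive density does not force even a single copy of a fixed complete subhypergraph (Tur\'an densities of complete hypergraphs are strictly positive), so heaviness alone cannot drive a greedy embedding. Claim (a) suffers similarly: your set $S$ (heavy vertices padded arbitrarily to size $s$) need not be a vertex cover of $H$, and the three alternatives you offer for (a) do not constitute an argument; in particular, deleting an edge and ``adding a missing crossing edge'' may destroy $F$-freeness, so extremality cannot be invoked that way without further work.

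What is missing is the paper's opening dichotomy, which replaces your heaviness threshold by \emph{near-full} degree. List the degrees in decreasing order; a greedy matching argument gives $d(v_{s+1})\le rs\binom{n-2}{r-2}$, and since the at most $rs$ vertices of a maximum matching meet every hyperedge, $|E(H)|\le\sum_{i=1}^{s}d(v_i)+r(r-1)s^2\binom{n-2}{r-2}$. Consequently, either $d(v_s)\le\binom{n-1}{r-1}-cn^{r-2}$, in which case $|E(H)|$ is already below the claimed value and there is nothing left to prove, or each of $v_1,\dots,v_s$ has degree at least $\binom{n-1}{r-1}-cn^{r-2}$. In the latter case the links of $v_1,\dots,v_s$ restricted to $V(H)\setminus U$ miss only $O(n^{r-2})$ of all $(r-1)$-sets, so their common intersection contains a complete $(r-1)$-graph on $r(s+1)$ vertices; this is what proves your claim (a), since an edge outside $U$ together with greedily chosen disjoint edges through $v_1,\dots,v_s$ inside that complete subhypergraph yields an $M_{s+1}^r$. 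Finally, the paper never proves your claim (b): instead of making an embedding succeed, it counts failed embeddings. If $H[U]$ contains some $F_0\in\cF$, then $F$-freeness forces, for each of at least $(n-s)/(|V(F)|-|V(F_0)|)$ pairwise disjoint candidate sets $Q'$ outside $U$, a missing $r$-set which necessarily crosses between $U$ and $Q'$ (because $W$ is weakly independent), and disjointness of the sets $Q'$ makes these missing $r$-sets distinct. The crossing edges thus fall short of $\sum_{i=1}^{t}\binom{s}{i}\binom{n-s}{r-i}$ by $\Omega(n)$, which swamps the constant surplus $\binom{s}{r}-\ex_r(s,\cF)$ possible inside $U$. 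This contrapositive count is the idea that sidesteps the joint-codegree obstruction you correctly identified but did not overcome.
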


Further, we determine the order of magnitude of $\ex_r(n,\{F,M_{s+1}^r\})$ for a wide class of hypergraphs, and we express how it depends on the extremal number of some link hypergraphs of $F$ in the remaining cases. 
The \textit{link hypergraph} of a vertex $v$ of $F$ is the $(r-1)$-graph that has an $(r-1)$-set $e$ as a hyperedge if and only if $e$ together with $v$ forms a hyperedge of $F$.

\begin{prop}\label{thmnew2}
If $F$ does not contain a vertex which is contained in every hyperedge, then $\ex_r(n,\{F,M_{s+1}^r\})=\Theta(n^{r-1})$. 
Otherwise, let $F'$ denote the link $(r-1)$-graph of a vertex that is contained in each hyperedge. Then we have $\ex_r(n,\{F,M_{s+1}^r\})=\Theta(\ex_{r-1}(n,F'))$.
\end{prop}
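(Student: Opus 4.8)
The plan is to treat the two cases separately but to extract the upper bound from a single observation. Since $H$ contains no $M_{s+1}^r$, a maximum matching of $H$ has at most $s$ edges, so the set $T$ of its (at most $rs$) vertices is a transversal: every hyperedge of $H$ meets $T$. Hence $|E(H)|\le\sum_{v\in T}d_H(v)$, and since $d_H(v)=|L_v(H)|$ where $L_v(H)$ denotes the link $(r-1)$-graph of $v$ in $H$, it suffices to bound the links. In every case $|L_v(H)|\le\binom{n-1}{r-1}$, giving $\ex_r(n,\{F,M_{s+1}^r\})\le rs\binom{n-1}{r-1}=O(n^{r-1})$, which is exactly the upper bound needed for the first case.

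For the sharper upper bound in the second case, I would first record the structural fact that a vertex $v_0$ lying in every hyperedge makes $F$ the cone over its link $F'$, i.e. $E(F)=\{\{v_0\}\cup e' : e'\in E(F')\}$. The key equivalence is then that $H$ is $F$-free if and only if $L_v(H)$ is $F'$-free for every vertex $v$: a copy of $F'$ in $L_v(H)$ becomes a copy of $F$ after adding $v$ to each of its edges, and conversely the apex of any embedded $F$ has a link containing $F'$. Applying this to each $v\in T$ gives $d_H(v)=|L_v(H)|\le\ex_{r-1}(n-1,F')$, whence $\ex_r(n,\{F,M_{s+1}^r\})\le rs\,\ex_{r-1}(n-1,F')=O(\ex_{r-1}(n,F'))$ by monotonicity of the Tur\'an number in $n$.

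For the lower bound in the first case, when $F$ has no universal vertex I would take the star $S$ consisting of all $r$-sets through a fixed vertex $v$; it has $\binom{n-1}{r-1}=\Omega(n^{r-1})$ edges and matching number $1\le s$. It is $F$-free because in any embedding $\phi$ of $F$ into $S$ every image edge contains $v$, forcing $v=\phi(u_0)$ for a vertex $u_0$ of $F$ that then lies in every edge of $F$ --- impossible. This matches the upper bound and yields $\Theta(n^{r-1})$.

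The heart of the matter is the lower bound in the second case. I would let $G$ be an extremal $F'$-free $(r-1)$-graph on $n-1$ vertices and take $H$ to be the cone over $G$ with a new apex $v$; then $|E(H)|=\ex_{r-1}(n-1,F')$ and $H$ has matching number $1$. The main obstacle is proving $H$ is $F$-free, which is delicate precisely when $F$ has several universal vertices (equivalently, when $F'$ itself has a universal vertex), for then the apex argument above does not immediately apply. To handle this I would argue directly: in any embedding $\phi\colon F\to H$ the apex $v$ must be hit, say $v=\phi(u_0)$, and $u_0$ must lie in every edge of $F$; restricting $\phi$ then embeds the link $L_{u_0}(F)$ into $L_v(H)=G$. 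The point I would exploit is that any two universal vertices of $F$ have isomorphic links --- the transposition swapping them carries $L_{u_0}(F)$ onto $F'=L_{v_0}(F)$ --- so $G$ would contain a copy of $F'$, contradicting its choice. Finally the standard averaging inequality $\ex_{r-1}(n-1,F')\ge\frac{n-r+1}{n}\ex_{r-1}(n,F')$ (which expresses that $\ex_{r-1}(m,F')/\binom{m}{r-1}$ is non-increasing in $m$) converts the bound into $\Omega(\ex_{r-1}(n,F'))$, completing the proof.
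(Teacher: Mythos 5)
Your proposal is correct and takes essentially the same route as the paper: the vertex set of a maximum matching serves as a transversal of size at most $rs$, bounding $|E(H)|$ by the sizes of the links (which are $F'$-free in the second case because $F$ is exactly the cone over $F'$), while the lower bounds come from the star in the first case and the cone over an extremal $F'$-free $(r-1)$-graph in the second. You are in fact slightly more careful than the paper at two points it glosses over: the justification that the image of \emph{any} universal vertex of $F$ has link containing $F'$ (via the observation that all universal vertices have isomorphic links), and the averaging step $\ex_{r-1}(n-1,F')\ge\frac{n-r+1}{n}\ex_{r-1}(n,F')$ needed to state the lower bound in terms of $\ex_{r-1}(n,F')$ rather than $\ex_{r-1}(n-1,F')$.
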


Next let us consider the case $\chi(F)=2$. We will take the colors of a proper $2$-coloring of $F$ to be red and blue and when we say \textit{red-blue coloring} we always mean a proper coloring.
Let $p(F)$ denote the minimum number of red vertices in a red-blue coloring of $F$. 
We say that a red-blue coloring of $F$ is \textit{strong} if each hyperedge contains exactly one red vertex. 
We remark that the set of red vertices in a strong red-blue coloring is also called a \textit{crosscut} in the literature.
Let $q(F)$ denote the minimum number of red vertices in a strong red-blue coloring of $F$, with $q(F)=\infty$ if $F$ does not have a strong red-blue coloring. 
It is easy to see that $p(F)\leq q(F)$. 
Let $M(F)$ denote the matching number of $F$. For the $2$-chromatic case, we have the following result. 

\begin{prop}\label{thmnew3}
Let $\chi(F)=2$, $n\geq (2s+1)r-s$ and $\omega=\min\{s,M(F)-1\}$. Then 
\begin{eqnarray*}
\sum_{i=1}^{\omega}\binom{\omega}{i}\binom{n-\omega}{r-i}\leq \ex_r(n,\{F,M_{s+1}^r\})\leq \sum_{i=1}^s\binom{s}{i}\binom{n-s}{r-i}.
\end{eqnarray*}
Moreover, if $s<\min\{p(F),r\}$, then $\ex_r(n,\{F,M_{s+1}^r\})=\sum_{i=1}^s\binom{s}{i}\binom{n-s}{r-i}$. If $q(F)>s$, then $\ex_r(n,\{F,M_{s+1}^r\})=s\binom{n}{r-1}+O(n^{r-2})$.
\end{prop}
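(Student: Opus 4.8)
The plan is to derive every assertion from the Erd\H{o}s--Matching bound (Theorem~\ref{emc}) together with three explicit $\{F,M_{s+1}^r\}$-free constructions, where in each case the $F$-freeness is certified by a colouring of $F$ realising one of the parameters $M(F)$, $p(F)$, $q(F)$.

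For the upper bound I would simply note that an $\{F,M_{s+1}^r\}$-free $r$-graph is in particular $M_{s+1}^r$-free, so Theorem~\ref{emc} (applicable since $n\ge(2s+1)r-s$) gives at most $\sum_{i=1}^s\binom{s}{i}\binom{n-s}{r-i}$ edges. For the final assertion I would further observe that $\binom{n-s}{r-i}=\binom{n}{r-i}+O(n^{r-i-1})$, so only the $i=1$ summand contributes at top order and this bound equals $s\binom{n}{r-1}+O(n^{r-2})$.

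The general lower bound comes from $\mathcal{A}(n,r,\omega)$ with $\omega=\min\{s,M(F)-1\}$, which has exactly $\sum_{i=1}^{\omega}\binom{\omega}{i}\binom{n-\omega}{r-i}$ edges and matching number at most $\omega\le s$ (every edge meets the fixed $\omega$-set, so disjoint edges use distinct vertices of it). To see it is $F$-free, I would note that an embedding of $F$ would pull the fixed $\omega$-set back to a set of at most $\omega$ vertices meeting every edge of $F$; but a matching of size $M(F)$ forces every such transversal to have at least $M(F)>\omega$ vertices, a contradiction. For the case $s<\min\{p(F),r\}$ I would instead use $\mathcal{A}(n,r,s)$, which is again $M_{s+1}^r$-free and whose edge count matches the upper bound, certifying its $F$-freeness by a proper colouring: given an embedding of $F$, colour a vertex red precisely when it lands in the fixed $s$-set $S$. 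Every edge meets $S$ (so has a red vertex) and, since $|S|=s<r$, no edge lies entirely in $S$ (so each edge has a blue vertex); this proper red--blue colouring uses at most $s$ red vertices, forcing $p(F)\le s$ and contradicting $s<p(F)$.

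For the case $q(F)>s$, the matching lower bound $s\binom{n}{r-1}+O(n^{r-2})$ is supplied by the $r$-graph $\mathcal{B}$ consisting of all $r$-edges meeting a fixed $s$-set $S$ in exactly one vertex; this has $s\binom{n-s}{r-1}=s\binom{n}{r-1}+O(n^{r-2})$ edges and matching number at most $s$. If $F$ embedded into $\mathcal{B}$, then colouring red exactly the vertices landing in $S$ would give a colouring in which every edge has exactly one red vertex---a strong red--blue colouring with at most $s$ red vertices---whence $q(F)\le s$, a contradiction (the case $q(F)=\infty$ being immediate). The main obstacle is conceptual rather than computational: one must identify the correct colouring certificate ($\omega$-transversal, proper colouring, strong colouring) for each construction and parameter, and check the monotonicity facts $M(F)\le\tau(F)\le p(F)\le q(F)$ that make the regimes consistent. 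The only genuinely delicate point is that in the $q(F)>s$ regime the upper and lower bounds agree only up to the $O(n^{r-2})$ term, so I must verify both that the Erd\H{o}s--Matching bound collapses to $s\binom{n}{r-1}$ at leading order and that $\mathcal{B}$ attains it.
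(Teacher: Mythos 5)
Your proposal is correct and follows essentially the same route as the paper: the upper bound via the Erd\H{o}s--Matching theorem, the general lower bound via $\mathcal{A}(n,r,\omega)$ (your transversal argument is equivalent to the paper's observation that $M^r_{\omega+1}\subseteq F$), and the two regimes via $\mathcal{A}(n,r,s)$ and the ``exactly one vertex in the fixed $s$-set'' construction with the proper-colouring and strong-colouring certificates, respectively. The only difference is that you spell out the colouring certificates (e.g.\ why $\mathcal{A}(n,r,s)$ is $F$-free when $s<\min\{p(F),r\}$) which the paper merely asserts.
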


For $q(F)\leq s$ and $r=3$, we give a conjecture (see Conjecture \ref{con}), and prove that this conjecture holds for a special class of 2-chromatic hypergraphs.
For a given graph $G$, the \textit{$r$-expansion} $G^r$ of $G$ is the $r$-graph obtained from $G$ by adding $r-2$ distinct new vertices to each edge of $G$, such that the $(r-2)|E(G)|$ new vertices are distinct from each other and are not in $V(G)$. Expansions were introduced by Mubayi \cite{mub}, see \cite{mubver} for a survey. 
Very recently, Zhou and Yuan \cite{zhy} considered linear Tur\'{a}n problems on expansions of graphs with bounded matching number. 
If $r\geq3$, then by coloring $V(G)$ red and $V(G^r)\setminus V(G)$ blue, it can be seen that $G^r$ has a proper 2-coloring. Therefore, $\chi(G^r)=2$ for $r\geq3$, i.e., the $r$-expansion of any graph is a 2-chromatic hypergraph. 
We obtain the following results on expansions of bipartite graphs.

\begin{thm}\label{thmnew5}
If $G$ is bipartite and $p=p(G)\leq s$, then for sufficiently large $n$,
\begin{eqnarray*}
\ex_r(n,\{G^r,M_{s+1}^r\})=(p-1)\binom{n}{r-1}+O(n^{r-2}).
\end{eqnarray*}
\end{thm}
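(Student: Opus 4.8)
\medskip
The plan is to prove matching upper and lower bounds, each with leading coefficient $p-1$ in front of $\binom{n}{r-1}$.

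\textbf{Lower bound.} I will use the construction $H_0$ consisting of all $r$-sets meeting a fixed $(p-1)$-set $R=\{v_1,\dots,v_{p-1}\}$ in \emph{exactly} one vertex, so that $|E(H_0)|=(p-1)\binom{n-p+1}{r-1}=(p-1)\binom{n}{r-1}+O(n^{r-2})$. Since each edge meets $R$ in a single vertex, disjoint edges use distinct vertices of $R$, whence $M(H_0)\le p-1\le s$ and $H_0$ is $M_{s+1}^r$-free. If $H_0$ contained a copy of $G^r$, colouring a vertex of $G^r$ red exactly when its image lies in $R$ would give a proper red--blue colouring in which every hyperedge has exactly one red vertex, i.e.\ a strong colouring with at most $p-1$ red vertices, forcing $q(G^r)\le p-1$. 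So the lower bound reduces to the identity $q(G^r)=p(G)$. A strong colouring of $G^r$ is governed by its red \emph{core} vertices, which form an independent set $I$ of $G$ (two adjacent red core vertices would put two reds in one hyperedge), every edge of $G$ disjoint from $I$ then receiving one red expansion vertex; hence $q(G^r)=\min_I\big(|I|+e(G-I)\big)$ over independent $I$, where $e(G-I)$ counts edges of $G$ disjoint from $I$. Taking $I$ a smallest colour class gives $q(G^r)\le p(G)$; for the reverse bound I rewrite $|I|+e(G-I)=|E(G)|-\sum_{v\in I}(d(v)-1)$ and bound the weighted independent set $\sum_{v\in I}(d(v)-1)$ against the vertex cover given by the larger colour class via K\"onig's theorem, connectivity of each component ruling out the degenerate low-degree ``crowding'' that would otherwise beat $p(G)$.

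\textbf{Upper bound, reduction.} Let $H$ be $\{G^r,M_{s+1}^r\}$-free. A maximum matching has at most $s$ edges, so its at most $rs$ vertices form a transversal $C$ meeting every edge. At most $O(n^{r-2})$ edges meet $C$ in two or more vertices, so $|E(H)|=\sum_{c\in C}\ell(c)+O(n^{r-2})$, where $\ell(c)$ counts edges $e$ with $e\cap C=\{c\}$. Counting such edges through codegrees gives $\sum_{c\in C}\ell(c)=\tfrac{1}{r-1}\sum_{v\notin C}D(v)$ with $D(v)=\sum_{c\in C}\deg_c(v)$, where $\deg_c(v)$ is the number of $(r-2)$-sets $Z\subseteq V\setminus C$ with $\{c,v\}\cup Z\in E(H)$.

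\textbf{Upper bound, embedding.} Form the auxiliary bipartite graph $A$ on parts $C$ and $V\setminus C$, joining $c$ to $v$ whenever $\deg_c(v)\ge\theta$, for a threshold $\theta=\Theta(n^{r-3})$. If at least $|V(G)|$ vertices of $V\setminus C$ shared a common set of $p$ neighbours in $A$, I could embed $G$ with its smaller side on those $p$ cover vertices, and then, since each edge of this copy has codegree at least $\theta$, greedily pick for every edge a fresh $(r-2)$-set of expansion vertices avoiding the $O(1)$ points already placed, producing $G^r\subseteq H$. Hence, by the K\H{o}v\'ari--S\'os--Tur\'an grouping argument exactly as in the graph case, all but $O(1)$ vertices $v$ have $A$-degree at most $p-1$. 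For such $v$, at most $p-1$ cover vertices contribute a codegree up to $\binom{n}{r-2}$ while the rest contribute less than $\theta$ each, so $D(v)\le(p-1)\binom{n}{r-2}+O(\theta)$; summing over $v$, using $|C|=O(1)$ and $\tfrac{n}{r-1}\binom{n-1}{r-2}=\binom{n}{r-1}$, yields $\sum_{c}\ell(c)\le(p-1)\binom{n}{r-1}+O(n^{r-2})$, as required.

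\textbf{Main obstacle.} The delicate point is the calibration of $\theta$. Lifting a copy of $G$ in $A$ to a copy of $G^r$ in $H$ requires $\theta$ to exceed the number of $(r-2)$-sets meeting the bounded set of already-placed vertices, forcing $\theta=\Omega(n^{r-3})$; on the other hand the error contributed by the $A$-non-neighbours is of order $n\theta/(r-1)$, which must remain $O(n^{r-2})$, forcing $\theta=O(n^{r-3})$. Both constraints meet precisely at $\theta=\Theta(n^{r-3})$ (and this is exactly where $r\ge 3$ enters). The crux will be to verify that at this single admissible threshold the embedding genuinely goes through for \emph{every} bipartite $G$, so that $p$ ``heavy'' cover vertices always force a copy of $G^r$, rather than only for $G$ whose links happen to have positive density; together with the self-contained identity $q(G^r)=p(G)$, this is where the real work lies.
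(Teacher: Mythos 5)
Your upper bound is essentially the paper's own argument (transversal from a maximum matching, codegree threshold $\Theta(n^{r-3})$, pigeonhole over $p$-subsets of the transversal, greedy lifting of $G$ to $G^r$), and it goes through; your calibration of $\theta$ resolves exactly as you predict. The genuine gap is in the lower bound, precisely at the step you flag as ``where the real work lies'': the identity $q(G^r)=p(G)$ is \emph{false} for $r\ge 3$. Your formula $q(G^r)=\min_I\bigl(|I|+e(G-I)\bigr)$ over independent sets $I$ of $G$ is correct, but this minimum can be strictly smaller than $p(G)$. Counterexample: let $G$ be the tree obtained from two vertex-disjoint stars $K_{1,3}$, with centers $a$ and $b$, by adding an edge between a leaf $y$ of $a$ and a leaf $x$ of $b$. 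Then $G$ is connected and bipartite with both color classes of size $4$, so $p(G)=4$; yet $I=\{a,b\}$ is independent and leaves only the edge $xy$ uncovered, so $q(G^r)\le |I|+e(G-I)=3$. In your rewriting this reads $\sum_{v\in I}(d(v)-1)=4>|E(G)|-p(G)=3$, and since this $G$ is connected, the appeal to connectivity in your sketched K\"onig argument cannot exclude it. Concretely, your construction $H_0$ with $p-1=3$ vertices in $R$ is \emph{not} $G^r$-free: embed $a,b$ on two vertices of $R$, use the third vertex of $R$ as the expansion vertex of the hyperedge replacing $xy$, and place all remaining core and expansion vertices outside $R$; every hyperedge of this copy of $G^r$ meets $R$ in exactly one vertex, so it lies in $H_0$.

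This is not a local flaw you could patch: the theorem itself fails for such $G$, and the paper's own ``proof'' of the lower bound consists of the same unproved (false) assertion ``$G^r$-free as $q(G^r)=p$''. Indeed the statement contradicts Theorem \ref{temi}: for the tree above, $m(G)=\min_I\bigl(|I|+e(G-I)\bigr)=3$, so Theorem \ref{temi} (applicable since $\chi(G)=2<r$) gives $\ex_r(n,\{G^r,M_{s+1}^r\})=(1+o(1))\,2\binom{n}{r-1}$ once $s\ge 3$, strictly below the claimed $3\binom{n}{r-1}+O(n^{r-2})$. The parameter the lower bound can actually support is $q(G^r)=m(G)$: the construction with $q(G^r)-1$ center vertices is $G^r$-free (this is the paper's $\mathcal{G}_1$). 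But then the upper bound --- yours and the paper's alike --- only yields the coefficient $p-1\ge q(G^r)-1$, and matching $q(G^r)-1$ requires the heavier apparatus of Theorem \ref{temi} (heavy pairs plus the Erd\H{o}s argument for $K(k)^{r-1}$), not merely the K\H{o}v\'ari--S\'os--Tur\'an pigeonhole. So your proof, like the paper's, is valid exactly for those bipartite $G$ with $m(G)=p(G)$ (e.g.\ complete bipartite graphs, even cycles, perfect matchings), but not for general bipartite $G$.
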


\begin{prop}\label{thmnew6}
If $G$ is bipartite and $p=p(G)>s$, then for sufficiently large $n$,
\begin{eqnarray*}
\ex_r(n,\{G^r,M_{s+1}^r\})=\sum_{i=1}^s\binom{s}{i}\binom{n-s}{r-i}.
\end{eqnarray*}
\end{prop}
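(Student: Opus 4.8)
The plan is to prove matching upper and lower bounds, both anchored at the Erd\H{o}s--Matching quantity $\sum_{i=1}^s\binom si\binom{n-s}{r-i}$, which by Theorem~\ref{emc} equals $\ex_r(n,M_{s+1}^r)$ and, for $n\ge(2s+1)r-s$, is attained only by $\cA(n,r,s)$. The upper bound is immediate: every $\{G^r,M_{s+1}^r\}$-free $r$-graph is in particular $M_{s+1}^r$-free, so $\ex_r(n,\{G^r,M_{s+1}^r\})\le\ex_r(n,M_{s+1}^r)=\sum_{i=1}^s\binom si\binom{n-s}{r-i}$. To match it I must exhibit a $\{G^r,M_{s+1}^r\}$-free $r$-graph with this many edges, and by the uniqueness clause of Theorem~\ref{emc} the only candidate is $\cA(n,r,s)$ itself. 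Since $\cA(n,r,s)$ has matching number exactly $s$ (so is $M_{s+1}^r$-free) and has the prescribed number of edges, the entire proposition reduces to the single claim that, under the hypothesis $p(G)>s$, the hypergraph $\cA(n,r,s)$ contains no copy of $G^r$.

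To establish this freeness I would argue by contradiction, translating an embedding of $G^r$ into a colouring of $G$. Fix the $s$-set $S$ whose incident $r$-sets are the hyperedges of $\cA(n,r,s)$, and suppose $\phi$ embeds $G^r$ into $\cA(n,r,s)$. For each edge $e=uv$ of $G$ the corresponding hyperedge $\phi(\tilde e)$ meets $S$, and the intersection point is either the image of a core vertex ($\phi(u)$ or $\phi(v)$) or one of the $r-2$ expansion vertices attached to $e$. The crucial structural feature is that expansion vertices are private to a single hyperedge, so each element of $S$ that is an expansion vertex accounts for at most one edge of $G$. Hence, writing $R=\{x\in V(G):\phi(x)\in S\}$, we have $|R|\le s$ by injectivity, and all but at most $s-|R|$ edges of $G$ are incident to $R$. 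The goal is to upgrade this to a red--blue colouring of $G$ whose red class has size at most $s$, which contradicts $p(G)>s$.

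The main obstacle is exactly this upgrade. The data above yields at once a vertex cover of $G$ of size at most $s$ (take $R$ together with one endpoint of each uncovered edge), but a vertex cover need not be a colour class: $R$ may contain both endpoints of some edge, so simply declaring the vertices of $R$ red need not give a proper $2$-colouring. The heart of the proof is to exploit the bipartite structure of $G$, together with the freedom in how each hyperedge is allowed to meet $S$, to produce an \emph{honest} proper red--blue colouring with at most $s$ red vertices; the crosscut (strong-colouring) formalism is the natural bookkeeping device here, since a crosscut of $G^r$ of size at most $s$ corresponds to a proper $2$-colouring of $G$ with at most $s$ vertices on one side. I expect the delicate points to be controlling the edges whose $S$-intersection is ``doubled'' (both endpoints in $S$) or occurs on an expansion vertex, and showing — for $n$ sufficiently large — that these cannot be arranged so as to evade the bound $p(G)>s$. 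Once the extracted colouring is shown to be proper, the contradiction is immediate and the lower bound, hence the claimed equality, follows.
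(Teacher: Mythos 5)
Your upper bound, and your reduction of the whole proposition (via the uniqueness clause of Theorem~\ref{emc}) to the single claim that $\cA(n,r,s)$ is $G^r$-free, are both correct and match the paper, whose lower-bound construction is exactly $\cA(n,r,s)$. The genuine gap is the step you yourself flag as the heart of the proof: upgrading the embedding data to a proper red--blue colouring of $G$ with at most $s$ red vertices. That step is not merely delicate, it cannot be carried out, because the statement it is meant to prove is false under the hypothesis $p(G)>s$. An embedding of $G^r$ into $\cA(n,r,s)$ is equivalent to the existence of sets $R\subseteq V(G)$ and $E_0\subseteq E(G)$ with $|R|+|E_0|\le s$ such that every edge of $G$ is incident to $R$ or lies in $E_0$: given such a pair, map $R$ injectively into the distinguished $s$-set $S$, put one expansion vertex of each edge of $E_0$ into $S$, and map everything else outside $S$ (note $R$ need \emph{not} be independent, since $\cA(n,r,s)$ contains every $r$-set meeting $S$, including those meeting it twice). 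For bipartite $G$ the minimum of $|R|+|E_0|$ over such pairs equals the matching number $\nu(G)$: take $R$ a minimum vertex cover and $E_0=\emptyset$ by K\"onig's theorem, and conversely distinct edges of a matching must be paid for by distinct vertices of $R$ or distinct members of $E_0$. So $\cA(n,r,s)$ contains $G^r$ whenever $\nu(G)\le s$, and $\nu(G)$ can be far smaller than $p(G)$: for the double star $D$ (two adjacent centres, each with three pendant leaves) one has $\nu(D)=2$ but $p(D)=4$, so for $s=2$ or $s=3$ the hypothesis $p(D)>s$ holds and yet $D^r\subseteq\cA(n,r,s)$, by mapping the two centres into $S$. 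No proper colouring with at most $s$ red vertices can be extracted from that embedding because none exists.

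You should also know that the paper's own proof breaks at exactly the same point: it asserts that $M_p\subseteq G$ ``by the definition of $p(G)$'', which is the inequality $\nu(G)\ge p(G)$; the double star refutes this, since for bipartite graphs one only has $\nu(G)\le\tau(G)\le p(G)$, with the last inequality strict whenever no minimum vertex cover is independent. Consequently the proposition itself fails as stated: for $G=D$ and $s=3$, the uniqueness clause of Theorem~\ref{emc} shows that every $M_4^r$-free $r$-graph with $\sum_{i=1}^{3}\binom{3}{i}\binom{n-3}{r-i}$ hyperedges is isomorphic to $\cA(n,r,3)$ and hence contains $D^r$, so $\ex_r(n,\{D^r,M_4^r\})$ is strictly below the claimed value. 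The correct hypothesis for the displayed equality is $\nu(G)>s$ (equivalently $M_{s+1}\subseteq G$), under which both your argument and the paper's collapse to one line: $\nu(G^r)=\nu(G)>s=\nu(\cA(n,r,s))$, hence $\cA(n,r,s)$ is $G^r$-free. So your instinct about where the difficulty lies was exactly right; the resolution is to strengthen the hypothesis (replace $p(G)$ by the matching number of $G$), not to find a cleverer colouring extraction.
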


For expansions of non-bipartite graphs, the situation is more complicated. Let $\chi(G)=k>2$. We determine the asymptotics when $k<r$. For a graph $G$, let $W$ be an independent set of vertices in $G$ and $Z$ be the set of edges not incident to $W$. Let $m(G)$ denote the smallest $|W|+|Z|$ for sets satisfying the above properties. 

\begin{thm}\label{temi}
    Let $k< r$. Then $\ex_r(n,\{G^r,M_{s+1}^r\})=(1+o(1))\min\{m(G)-1,s\}\binom{n}{r-1}$.
\end{thm}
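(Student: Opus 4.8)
The plan is to prove matching lower and upper bounds, each of leading order $\min\{m(G)-1,s\}\binom{n}{r-1}$. Throughout write $t=\min\{m(G)-1,s\}$.

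\textbf{Lower bound.} I would fix $t$ special vertices (``centers'') $c_1,\dots,c_t$ and let $H$ consist of all $r$-sets meeting $\{c_1,\dots,c_t\}$ in \emph{exactly} one vertex. Then $H$ has $t\binom{n-t}{r-1}=(1-o(1))\,t\binom{n}{r-1}$ edges, and since distinct edges of a matching must use distinct centers, its matching number is at most $t\le s$, so $H$ is $M_{s+1}^r$-free. The point is that $H$ is $G^r$-free whenever $t\le m(G)-1$: given a hypothetical copy of $G^r$, let $W\subseteq V(G)$ be the set of core vertices sent to centers. If two adjacent core vertices were sent to centers, the corresponding hyperedge would contain two centers, which is impossible; hence $W$ is independent. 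A core edge not incident to $W$ must have its unique center among its $r-2$ expansion vertices, and since expansion vertices are distinct across the hyperedges of $G^r$ and a center used as a core image cannot be an expansion vertex of any other hyperedge, the uncovered edges $Z$ consume $|Z|$ further distinct centers. Thus $|W|+|Z|\le t\le m(G)-1<m(G)$, contradicting the definition of $m(G)$. This is exactly where the requirement in the definition of $m(G)$ that $W$ be \emph{independent} is used.

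\textbf{Upper bound, reduction.} If $s\le m(G)-1$ then $t=s$, and Theorem~\ref{emc} already gives $\ex_r(n,\{G^r,M_{s+1}^r\})\le\ex_r(n,M_{s+1}^r)=(1+o(1))\,s\binom{n}{r-1}$, so I may assume $s\ge m(G)$ and must improve the pure matching bound to $(m(G)-1+o(1))\binom{n}{r-1}$. Take a maximum matching; its vertex set $T$ has size at most $rs$ and meets every edge. Fix a small $\eta>0$, call a vertex \emph{heavy} if its degree is at least $\eta n^{r-1}$, and let $D$ be the set of heavy vertices; since $\sum_v d(v)=r|E(H)|=O(n^{r-1})$, the set $D$ is bounded. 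Every edge missing $D$ meets $T\setminus D$, so there are at most $|T|\,\eta\,n^{r-1}\le rs\,\eta\,n^{r-1}$ of them, which is below $\epsilon n^{r-1}$ once $\eta$ is chosen small; consequently $|E(H)|\le\sum_{v\in D}d(v)+\epsilon n^{r-1}$. The whole problem thus reduces to the single inequality $\sum_{v\in D}d(v)\le(m(G)-1+o(1))\binom{n}{r-1}$, noting that the overcounting of edges through several heavy vertices only weakens this bound and so is harmless.

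\textbf{Key step and main obstacle.} To establish this inequality I would argue by contradiction: if $\sum_{v\in D}d(v)>(m(G)-1+\epsilon)\binom{n}{r-1}$ I want to build a copy of $G^r$. The enabling feature of the regime $k<r$ is that expansion vertices are never the bottleneck: a pair of codegree at least $\epsilon n^{r-2}$ can always be completed to a hyperedge avoiding any fixed $O(1)$ forbidden set, so $G^r$ embeds as soon as the core of $G$ can be placed on vertices for which every edge of $G$ lands on such a high-codegree pair, with the independent part of $G$ routed through \emph{common} high-codegree neighbours of heavy vertices. Each heavy vertex has positive link density and hence $\Omega(n)$ high-codegree neighbours, which supplies the raw material; the task is to select, according to a minimizer of $m(G)$, an independent set $W$ of $G$ to be hosted on heavy vertices and to route the uncovered edges $Z$ through distinct high-codegree configurations. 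The main obstacle, and the technical heart of the argument, is to make this embedding quantitatively tight: one must show that heavy weight exceeding $m(G)-1$ necessarily furnishes $|W|+|Z|\ge m(G)$ mutually compatible resources despite possible misalignment of the link supports (the phenomenon seen in hosts whose heavy links live on nearly disjoint vertex sets, which remain $G^r$-free precisely because their total weight stays well below $m(G)-1$). I expect this to require a supersaturation/stability analysis of the links of $D$ that mirrors the centers construction of the lower bound, so that the graph parameter $m(G)$, together with its independent-set side condition, emerges as the exact threshold.
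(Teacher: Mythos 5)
Your lower bound is correct and coincides with the paper's construction $\mathcal{G}_1$: all $r$-sets meeting a set of $t=\min\{m(G)-1,s\}$ centers in exactly one vertex, with the same argument (core vertices on centers form an independent set $W$; each edge of $Z$ consumes a distinct further center as an expansion vertex; hence $m(G)\le |W|+|Z|\le t$, a contradiction). The upper bound, however, has a genuine gap that you yourself flag. The reduction is sound: when $s\le m(G)-1$ Theorem~\ref{emc} suffices, and when $s\ge m(G)$ it is legitimate to reduce to bounding $\sum_{v\in D}d(v)$ by $(m(G)-1+o(1))\binom{n}{r-1}$ for a bounded set $D$ of high-degree vertices. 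But for that inequality — the entire content of the theorem in this regime — you only describe what you ``would'' do and ``expect'' to need; no proof is given. Notably, you never use the hypothesis $k<r$ in any substantive way: the observation that expansion vertices can be completed greedily is true for every $k$ and is not where $k<r$ enters.

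The missing idea, which is the heart of the paper's proof, is a classification of $(r-1)$-sets by which cover vertices extend them. The paper takes a minimum cover $U$ (your $D$ plays the same role), discards the $O(n^{r-2})$ hyperedges through non-heavy pairs, and for each $(r-1)$-set $Q\subseteq V(H)\setminus U$ defines $U(Q)$ as the set of vertices of $U$ whose link contains $Q$. The key claim is: for every fixed $U'$ with $|U'|\ge m(G)$, only $o(n^{r-1})$ sets $Q$ satisfy $U(Q)=U'$; granting this, summing over $Q$ gives at most $m(G)-1$ extensions for almost every $Q$, hence the bound. The claim is proved by contradiction: if $\Omega(n^{r-1})$ sets $Q$ share the same $U'$, then Erd\H{o}s's theorem on complete $(r-1)$-partite $(r-1)$-graphs produces a copy of $K(k)^{r-1}$ (the $(r-1)$-expansion of a complete $k$-partite graph with large parts) inside these common links --- and this is exactly where $k<r$ is needed, since $K(k)^{r-1}$ is $(r-1)$-partite only when $k\le r-1$. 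One then embeds the $k$-colorable graph $G-W$ into the $K(k)$, covers each edge of $Z$ by a hyperedge of the $K(k)^{r-1}$ extended with a distinct vertex of $U'$ (possible since $|U'|\ge m(G)\ge |W|+|Z|$), places $W$ on the remaining vertices of $U'$, and completes all hyperedges at $W$ greedily via heavy pairs, yielding a forbidden $G^r$. Your ``misalignment of link supports'' obstacle is dissolved precisely by this device: one never needs the links of distinct cover vertices to align globally, only to count, for each individual $Q$, how many cover vertices extend it. Without this (or an equivalent) argument, your proposal does not prove the upper bound.
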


\section{Proof of Theorem \ref{thmnew1}}
Now we are ready to prove Theorem \ref{thmnew1}, which we restate here for convenience.

\begin{thmbis}{thmnew1}
If $\chi(F)>2$ and $n$ is large enough, then $\ex_r(n,\{F,M_{s+1}^r\})=\ex_r(s,\cF)+\sum_{i=1}^t\binom{s}{i}\binom{n-s}{r-i}$, where $t=\min\{s,r-1\}$ and $\cF$ is the family of $r$-graphs obtained by deleting a weakly independent set from $F$.
\end{thmbis}
\begin{proof}[{\bf{Proof}}]
Note that there are no empty graphs in $\cF$ as $\chi(F)>2$. Thus we may assume that $H_0$ is an $s$-vertex $\cF$-free $r$-graph with $\ex_r(s,\cF)$ hyperedges. Let us add $n-s$ new vertices and add each hyperedge that contains at least one and at most $r-1$ vertices of $H_0$. The resulting $r$-graph is $M_{s+1}^r$-free as $s$ vertices of $H_0$ are incident to all the hyperedges, and $F$-free as $H_0$ is $\cF$-free and the $n-s$ vertices outside $H_0$ form a weakly independent set. This gives the lower bound.




To show the upper bound, consider an $\{F,M_{s+1}^r\}$-free $n$-vertex $r$-graph $H$. Let $v_1,\dots,v_n$ be the vertices of $H$ listed in decreasing order of their degrees. Observe that $d(v_{s+1})\le rs\binom{n-2}{r-2}$. Indeed, otherwise we may greedily find a matching $M_{s+1}^r$ in the following way. In step $i$ ($1\leq i\leq s+1$), we pick a hyperedge containing $v_i$ and no other vertex from $\{v_1,v_2,\dots,v_{s+1}\}$ which is disjoint from all previously selected hyperedges.
This is possible since the union of the previously selected hyperedges and $\{v_1,v_2,\dots,v_{s+1}\}\setminus \{v_i\}$ has size at most $rs$, and for any vertex $v \neq v_i$, at most $\binom{n-2}{r-2}$ hyperedges contain both $v$ and $v_i$.

Observe also that $H$ has at most $\sum_{i=1}^{rs}d(v_i)\le \sum_{i=1}^{s}d(v_i)+r(r-1)s^2\binom{n-2}{r-2}$ hyperedges. Indeed, the at most $rs$ vertices of a largest matching are incident to every hyperedge, and $rs$ vertices are incident to at most $\sum_{i=1}^{rs}d(v_i)$ hyperedges. The upper bound in this inequality follows from $d(v_{s+1})\leq rs\binom{n-2}{r-2}$.

We claim that $d(v_s)\ge \binom{n-1}{r-1}-cn^{r-2}$ for some $c>0$. Indeed, otherwise for any $c$ that satisfies $r(r-1)s^2\binom{n-2}{r-2}-cn^{r-2}\leq 0$, we have
\begin{eqnarray*}
\sum_{i=1}^{s}d(v_i)+r(r-1)s^2\binom{n-2}{r-2}&\le& s\binom{n-1}{r-1}-cn^{r-2}+r(r-1)s^2\binom{n-2}{r-2} \\
&\le& \sum_{i=1}^t\binom{s}{i}\binom{n-s}{r-i},
\end{eqnarray*}
and we are done.  Let $U=\{v_1,\dots,v_s\}$, 
and for $1\leq i\le s$ let $H_i$ denote the $(r-1)$-graph on $V(H)\setminus U$ which contains an $(r-1)$-set $h$ as a hyperedge if and only if $h\cup \{v_i\}$ is a hyperedge of $H$. By the above, there exists some $c'\geq c$ such that there are at least $\binom{n-1}{r-1}-c'n^{r-2}$ hyperedges in $H_i$ for any $1\leq i\leq s$. Indeed, we subtract from $d(v_i)$ the number of hyperedges that contain $v_i$ and another vertex of $U$. Then we have that all but at most $sc' n^{r-2}$ of the $(r-1)$-sets in $V(H)\setminus U$ are in each $H_i$. Since $n$ is large enough, there is a $K^{(r)}_{r(s+1)}$ contained in each $H_i$, where $K^{(r)}_{r(s+1)}$ is the complete $r$-graph on $r(s+1)$ vertices (using the known bounds on the Tur\'an number of $K^{(r)}_{r(s+1)}$). 

We claim that there is no hyperedge of $H$ outside $U$. Indeed, otherwise we could find $M^r_{s+1}$ greedily as earlier: first we pick a hyperedge outside $U$, and then in step $i+1$ ($1\leq i\leq s$), we pick $v_i$ and an $(r-1)$-set from the $K^{(r)}_{r(s+1)}$ found earlier that avoids all the vertices we have already picked. This is doable since there are at most $rs$ vertices already picked.

If $H[U]$ is $\cF$-free, then the number of hyperedges of $H$ is at most $\ex_r(s,\cF)+\sum_{i=1}^t\binom{s}{i}\binom{n-s}{r-i}$, where the first term is an upper bound on the number of hyperedges inside $U$, while the second term is an upper bound on the number of hyperedges with at least one vertex inside $U$ and at least one vertex outside $U$. Assume now that $H[U]$ contains an $r$-graph $F_0\in\cF$ with vertex-set $Q$. Then for each $(|V(F)|-|V(F_0)|)$-set $Q'$ outside $U$, since it does not extend the copy of $F_0$ to a copy of $F$, there is an $r$-subset of $Q\cup Q'$ that is not a hyperedge of $H$. Therefore, the number of hyperedges of $H$ is at most $\binom{s}{r}+\sum_{i=1}^t\binom{s}{i}\binom{n-s}{r-i}-(n-s)/(|V(F)|-|V(F_0)|)$, where the the first term is an upper bound on the number of hyperedges inside $U$, while the other terms are upper bound on the number of hyperedges with at least one vertex inside $U$ and at least one vertex outside $U$. Since $n$ is large enough, we have $\binom{s}{r}-(n-s)/(|V(F)|-|V(F_0)|)\leq \ex_r(s,\cF)$, and the proof is complete. 
\end{proof}

In the following we prove Proposition~\ref{thmnew2}, which we restate here for convenience.

\begin{propbis}{thmnew2}
Suppose that $F$ does not contain a vertex that is in each hyperedge, then $\ex_r(n,\{F,M_{s+1}^r\})=\Theta(n^{r-1})$.
Otherwise, let $F'$ denote the link $(r-1)$-graph of a vertex that is contained in each hyperedge. 
Then we have $\ex_r(n,\{F,M_{s+1}^r\})=\Theta(\ex_{r-1}(n,F'))$.
\end{propbis}
\begin{proof}[{\bf{Proof}}]
The upper bound $O(n^{r-1})$ follows from Theorem \ref{emc}.
The matching lower bound is given by the $n$-vertex $r$-graph with hyperedge set consisting of all hyperedges containing a given vertex $v$.

In the case $F$ contains a vertex $x$ in each hyperedge, the lower bound is given by taking an $F'$-free $(r-1)$-graph on $n-1$ vertices with $\ex_{r-1}(n,F')$ hyperedges, and adding a new vertex $v$ to each hyperedge. Obviously, this $r$-graph is $M_{s+1}^r$-free and $F$-free.
Indeed, otherwise the link hypergraph of $v$ contains a
copy of $F'$ as $v$ appears in each hyperedge of a copy of $F$.

We will show that $\ex_r(n,\{F,M_{s+1}^r\})\le rs\cdot\ex_{r-1}(n-1,F')$. Consider an $\{F,M_{s+1}^r\}$-free $r$-graph $H$ and a largest matching in $H$. The largest matching has at most $rs$ vertices, and each other hyperedge contains at least one of these vertices. Clearly, the link hypergraph of each vertex of the largest matching is $F'$-free, thus each vertex is in at most $\ex_{r-1}(n-1,F')$ hyperedges, completing the proof.
\end{proof}

\section{2-chromatic case}
\subsection{General hypergraphs}
Recall that for an $r$-graph $F$, $p(F)$ denotes the smallest possible number of red vertices in a red-blue coloring of $F$ and $q(F)$ denotes the smallest possible number of red vertices in a strong red-blue coloring of $F$, with $q(F)=\infty$ if $F$ does not have a strong red-blue coloring.

Let us prove Proposition \ref{thmnew3} that we restate here for convenience.

\begin{propbis}{thmnew3}
Let $\chi(F)=2$, $n\geq (2s+1)r-s$ and $\omega=\min\{s,M(F)-1\}$. Then 
\begin{eqnarray*}
\sum_{i=1}^{\omega}\binom{\omega}{i}\binom{n-\omega}{r-i}\leq \ex_r(n,\{F,M_{s+1}^r\})\leq \sum_{i=1}^s\binom{s}{i}\binom{n-s}{r-i}.
\end{eqnarray*}
Moreover, if $s<\min\{p(F),r\}$, then $\ex_r(n,\{F,M_{s+1}^r\})=\sum_{i=1}^s\binom{s}{i}\binom{n-s}{r-i}$. If $q(F)>s$, then $\ex_r(n,\{F,M_{s+1}^r\})=s\binom{n}{r-1}+O(n^{r-2})$.
\end{propbis}

\begin{proof}[{\bf{Proof}}]
It is easy to see that forbidding a matching gives the upper bound $\sum_{i=1}^s\binom{s}{i}\binom{n-s}{r-i}$. 
For the lower bound, we consider the following hypergraph: taking a set $A$ of order $\omega$ and a set $B$ of order $n-\omega$, and each hyperedge that contains at least one vertex of $A$. Clearly, this resulting hypergraph is $M_{\omega+1}^r$-free as $\omega$ vertices of $A$ are incident to all the hyperedges. Since $M^r_{\omega+1}\subseteq F$ as $\omega+1\leq M(F)$, this resulting hypergraph is $F$-free. 

If $s<\min\{p(F),r\}$, then the construction from Theorem \ref{emc} is $F$-free and $M^r_{s+1}$-free, giving a sharp lower bound.



If $q(F)>s$, then forbidding a matching already gives the upper bound $s\binom{n}{r-1}+O(n^{r-2})$. For the lower bound, we consider the following hypergraph: taking a set $A$ of order $s$ and a set $B$ of order $n-s$, and each hyperedge that contains exactly one $A$. Clearly, this hypergraph is $M^r_{s+1}$-free and $F$-free as $q(F)>s$. Therefore, $\ex_r(n,\{F,M^r_{s+1}\})=s\binom{n}{r-1}+O(n^{r-2})$.
\end{proof}

Consider now the case $q=q(F)\le s$ and $r=3$. Let us fix a strong red-blue coloring such that the set $Q$ of red vertices is of order $q$. Let $F_v$ denote the link graph of $v$. Observe that if $v\not\in Q$, then $Q$ is an independent vertex cover in $F_v$, thus $\chi(F_v)=2$. Let us order the link graphs of vertices in $Q$ according to their chromatic number the following way: $\chi(F_1)\ge \chi(F_2)\ge \cdots \ge \chi(F_q)$. Let $H_i$ for $i\le q$ denote the following hypergraph. Let $A_1$ be a set of $i-1$ vertices, $A_2$ be a set of $s-i+1$ vertices and $B$ be a set of $n-s$ vertices. We take a copy of $T(n-s,\ell_i-1)$ on $B$, where $\ell_i=\chi(F_i)$. Then we take each hyperedge that contains a vertex of $A_1$ and two vertices of $B$, and each hyperedge that contains a vertex of $A_2$ and an edge of the $T(n-s,\ell_i-1)$ in $B$.
Obviously, $H_i$ is $M_{s+1}^3$-free as each edge of $H_i$ contains at least one vertex in $A_1\cup A_2$. Observe that if $\ell_i>2$, then the link graph of a vertex in $A_2\cup B$ is at most $(\ell_i-1)$-chromatic. Indeed, the link graph of a vertex in $A_2$ is $T(n-s,\ell_i-1)$, while the link graph of a vertex in $B$ has one class replaced by the independent set $A_1\cup A_2$, i.e., it is 2-chromatic. This shows that $H_i$ is $F$-free as $F$ contains at least $i$ vertices with link graph of chromatic number at least $\ell_i$. 
If $\ell_i=2$, then there are no hyperedges in $A_2\cup B$ as $T(n-s,1)$ is an empty graph. By coloring $A_1$ red and the other vertices blue, $H_i$ has a strong red-blue coloring with less than $q$ red vertices, thus so does every subgraph of $H_i$. 

By the definition of $q(F)$, $F$ does not have such a coloring, thus $H_i$ is $F$-free. This gives a lower bound on $\ex_3(n,\{F,M_{s+1}^3\})$. We propose the following conjecture.

\begin{conjecture}\label{con}
If $\chi(F)=2$, $q(F)\le s$ and $n$ is sufficiently large, then $\ex_3(n,\{F,M_{s+1}^3\})=\max\{|E(H_i)|:\, i\le q(F)\}+o(n^2)$.
\end{conjecture}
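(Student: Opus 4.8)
Since the lower bound is already witnessed by the constructions $H_i$, the entire content of the conjecture is the matching upper bound $\ex_3(n,\{F,M_{s+1}^3\})\le \max\{|E(H_i)|:i\le q(F)\}+o(n^2)$. My plan is to reduce, exactly as at the start of the proof of Theorem~\ref{thmnew1}, to a statement about the link graphs of the few highest-degree vertices, and then to control those link graphs via the Erd\H{o}s--Stone--Simonovits theorem together with an embedding lemma encoding $F$-freeness. First I would run that degree/covering argument, noting that this part uses only $M^3_{s+1}$-freeness and \emph{not} the hypothesis on $\chi(F)$, so it applies verbatim here. Let $H$ be extremal and order its vertices by decreasing degree; a maximum matching covers every hyperedge and has at most $3s$ vertices, and $d(v_{s+1})\le 3s(n-2)=O(n)$ by the greedy matching argument, so all but $O(n)$ hyperedges meet $U:=\{v_1,\dots,v_s\}$. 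Writing $L_j$ for the link graph of $v_j$ on $B:=V(H)\setminus U$, this gives $|E(H)|=\sum_{j=1}^s e(L_j)+O(n)$, so it suffices to bound $\sum_{j=1}^s e(L_j)$.

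For each $j$ let $c_j$ be the largest $c$ such that $L_j$ contains a complete $c$-partite graph with all parts of size $t:=|V(F)|+1$; then $L_j$ is free of the complete $(c_j+1)$-partite graph with parts of size $t$, so Erd\H{o}s--Stone--Simonovits yields $e(L_j)\le(1-\tfrac{1}{c_j}+o(1))\binom{n}{2}$. Recall that $\ell_1\ge\cdots\ge\ell_q$ are the chromatic numbers of the link graphs $F_1,\dots,F_q$ of the $q=q(F)$ red vertices of the fixed minimum strong colouring. A short optimisation over profiles $c_1\ge\cdots\ge c_s$ obeying the Hall-type deficiency ``for some $i\le q$, at most $i-1$ of the $c_j$ are $\ge\ell_i$'' shows that $\sum_j(1-1/c_j)$ is maximised precisely by the profile of some $H_i$ (namely $i-1$ links of unbounded type and $s-i+1$ of type $\ell_i-1$), with value $|E(H_i)|/\binom{n}{2}$. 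Hence it is enough to prove the embedding lemma: if $H$ is $F$-free then its links satisfy that deficiency, i.e.\ there is an $i\le q$ with at most $i-1$ links carrying the aligned type-$\ell_i$ multipartite structure needed to place a copy of $F$.

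This embedding lemma is the crux and the step I expect to be the main obstacle. The difficulty is that $c_j$ is a \emph{global} invariant: the dense multipartite parts of the various $L_j$ may sit on different vertex sets, whereas a copy of $F$ forces the link graphs $F_1,\dots,F_q$ to be realised \emph{simultaneously on one common set of blue vertices}. Thus merely having $q$ links of the right types does not embed $F$ (two type-$\ell$ links with disjoint dense parts jointly embed nothing), so the naive type bound over-counts; but such misaligned links are each individually sparse and cannot beat $\max_i|E(H_i)|$. What is really needed is a stability statement: an edge-maximal $F$-free configuration must \emph{align} its links along a common Tur\'an partition, after which $H$ is a blow-up $H_i$ up to $o(n^2)$ edges. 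Making this rigorous --- presumably through weak regularity on the link graphs together with a joint blow-up / product-Ramsey argument that either produces the common multipartite structure carrying $F$ or charges the misalignment against the edge count --- is where the real work lies.

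For the special class $\ell_1=\cdots=\ell_q=2$, which is exactly the expansions of bipartite graphs and in which each $F_i$ is a matching, the alignment problem evaporates and the argument can be completed. A link of type $\le1$ is free of a fixed $K_{t,t}$ and so has only $o(n^2)$ edges, while a link of type $\ge2$ contains a $K_{t,t}$ with $t\to\infty$, into which the bounded matchings $F_i$ embed greedily; a common blue vertex set for all $q$ red vertices is then extracted by a counting/sunflower argument. This caps the number of dense links at $p(F)-1$ and yields $\ex_3(n,\{F,M^3_{s+1}\})=(p(F)-1)\binom{n}{2}+o(n^2)$, recovering Theorem~\ref{thmnew5} and confirming the conjecture in this class; the general case stands or falls with the alignment/embedding lemma above.
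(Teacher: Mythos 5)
You should first note what the paper actually does with this statement: Conjecture~\ref{con} is a \emph{conjecture}, and the paper contains no proof of it. What the paper establishes is only (i) the lower bound, via the constructions $H_i$ discussed immediately before the statement, and (ii) the remark that Theorem~\ref{thmnew5} with $r=3$ confirms the conjecture for expansions $F=G^3$ of bipartite graphs $G$ with $p(G)\le s$. Your proposal does not prove the conjecture either, and to your credit you say so explicitly: the entire upper bound rests on the ``alignment/embedding lemma,'' which you flag as unproven and which is exactly the difficulty that makes this a conjecture rather than a theorem. So what you have written is a program with a correctly identified missing step, not a proof.

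To name the gap concretely: your reduction $|E(H)|=\sum_{j=1}^s e(L_j)+O(n)$ is sound (hyperedges avoiding $U$ must meet the at most $3s$ vertices of a maximum matching outside $U$, each of degree $O(n)$, and hyperedges meeting $U$ twice are $O(n)$), and your profile optimization does reproduce $\max_i|E(H_i)|$. But the bridge between them --- deducing from $F$-freeness the deficiency condition ``for some $i\le q$, at most $i-1$ of the $c_j$ are $\ge\ell_i$'' --- is not valid as stated. The invariant $c_j$ certifies a large complete multipartite subgraph of $L_j$ on an \emph{arbitrary} vertex set, and these witnesses for different $j$ may be pairwise disjoint, whereas a copy of $F$ requires the blue vertices to be realized on a \emph{single common} set serving all $q$ red links simultaneously; the Erd\H{o}s--Stone--Simonovits theorem gives density information only, not common position, and the proposed ``weak regularity plus joint blow-up'' is a direction, not an argument. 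Even your special case $\ell_1=\cdots=\ell_q=2$ is not complete: the ``counting/sunflower'' extraction of a common blue set is asserted rather than proved (shared blue vertices of distinct red links still need a common home); the cap on dense links should be $q(F)-1$, not $p(F)-1$ (these coincide when $F=G^3$ for bipartite $G$, but $p(F)<q(F)$ is possible in general, and the class of $F$ with all red links bipartite is strictly larger than the class of bipartite expansions); and the paper's own proof of Theorem~\ref{thmnew5} proceeds quite differently, by a good/nice-vertex counting argument with a greedy edge-by-edge embedding of $G^r$ rather than via K\H{o}v\'ari--S\'os--Tur\'an/Erd\H{o}s--Stone on links. In short, your partial results roughly coincide with what the paper actually knows, but the central lemma of your outline is precisely the open part of the conjecture.
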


If $G$ is a bipartite graph with $p=p(G)\leq s$, then $q(G^3)=p\leq s$ and $\chi(G^3_1)=\chi(G^3_2)=\cdots=\chi(G^3_p)=2$. By the definition of $H_i$, we have $\max\{|E(H_i)|:\, i\le q(F)\}=(p-1)\binom{n}{2}$. Thus, Theorem \ref{thmnew5} for $r=3$ confirms Conjecture \ref{con} for the case of any bipartite graph $G$ with $p(G)\leq s$ and sufficiently large $n$.




\subsection{Expansion of graphs with bounded matching number} 

Let us prove Theorem~\ref{thmnew5}, which we restate here for convenience.

\begin{thmbis}{thmnew5}
If $G$ is bipartite graph and $p=p(G)\leq s$, then for sufficiently large $n$,
\begin{eqnarray*}
\ex_r(n,\{G^r,M_{s+1}^r\})=(p-1)\binom{n}{r-1}+O(n^{r-2}).
\end{eqnarray*}
\end{thmbis}
\begin{proof}[{\bf{Proof.}}]
For the lower bound, we consider the following hypergraph: take a set $A$ of order $p-1$ and a set $B$ of order $n-p+1$, and each hyperedge that contains exactly one vertex of $A$.
Clearly, this resulting hypergraph is $M_{s+1}^r$-free as $p-1$ vertices of $A$ are incident to all the hyperedges, and $G^r$-free as $q(G^r)=p$. Therefore, $\ex_r(n,\{G^r,M^r_{s+1}\})\geq(p-1)\binom{n}{r-1}+O(n^{r-2})$.

We now continue with the upper bound. Suppose $H$ is a $\{G^r,M^r_{s+1}\}$-free $r$-graph on $n$ vertices. Let $M$ be a largest matching in $H$ and $U:=V(M)$. Then $|U|\leq rs$ as $H$ is $M_{s+1}^r$-free. 
Let $q:=|V(G)|-p$.
Consider a $p$-subset $S$ of $U$. We say that a vertex $v\not\in U$ is \textit{good} with respect to $S$ if for every $u\in S$ there are at least $pqrn^{r-3}$ hyperedges in $H$ that contain both $u$ and $v$. We claim that there are at most $q-1$ good vertices with respect to $S$. Assume otherwise, identify the vertices of $G$ in a part of order $p$ with $S$ and identify the other vertices of $G$ with $q$ good vertices and pick an arbitrary order of the edges of $G$. For each edge we will pick a hyperedge of $H$ that contains the edge such that altogether they form $G^r$. At each step, we have to pick a hyperedge that contains $u$, $v$ and does not contain any of the at most $pqr$ other vertices in the hyperedges picked earlier. This is possible, since we forbid less than $pqr\binom{n}{r-3}$ hyperedges this way.

As there are at most $\binom{rs}{p}$ $p$-sets in $U$, there are at most $(q-1)\binom{rs}{p}$ vertices that are good with respect to some $p$-sets of $U$. Let $W$ be the set of at least $n-(q-1)\binom{rs}{p}-rs$ other vertices outside $U$. Given a vertex $w\in W$, we say that $u\in U$ is \textit{nice} with respect to $w$ if at least  $pqrn^{r-3}$ hyperedges in $H$ contain both $u$ and $w$. There are at most $p-1$ nice vertices with respect to $w$, otherwise $w$ would be good with respect to a $p$-set.

There are $O(n^{r-2})$ hyperedges that contain fewer than $r-1$ vertices from $W$. We claim that $V(H)\setminus U$ is a weakly independent set of $H$. In fact, if there is a hyperedge that contains $r$ vertices from $V(H)\setminus U$, then we may find a larger matching in $H$, which contradicts that $M$ is a largest matching. Thus, each other hyperedge contains a vertex $w\in W$, a vertex $u\in U$ and $r-2$ other vertices from $W$. For vertices of $U$ that are not nice with respect to $w$, there are less than  $pqrn^{r-3}$ hyperedges in $H$ that contain both $u$ and $w$. Altogether, there are at less than $|U||W|pqrn^{r-3}=O(n^{r-2})$ hyperedges of this type.

Finally, each of the remaining hyperedges contains a vertex $w\in W$, one of the at most $p-1$ nice vertices with respect to $w$ and $r-2$ other vertices from $W\setminus\{w\}$. Let $H'$ be the subgraph of $H$ consisting of these remaining hyperedges. For any $w\in W$, we have $d_{H'}(w)\leq (p-1)\binom{n}{r-2}$. Then
\begin{eqnarray*}
|E(H')|=\frac{\sum_{w\in W}d_{H'}(w)}{r-1}\leq \frac{|W|\cdot(p-1)\binom{n}{r-2}}{r-1}\leq \frac{n(p-1)\binom{n}{r-2}}{r-1}=(p-1)\binom{n}{r-1}+O(n^{r-2}).
\end{eqnarray*}
This completes the proof.
\end{proof}

Recall that Proposition \ref{thmnew6} states that if $G$ is bipartite and $p=p(G)>s$, then for sufficiently large $n$,
$\ex_r(n,\{G^r,M_{s+1}^r\})=\sum_{i=1}^s\binom{s}{i}\binom{n-s}{r-i}.$

\begin{proof}[{\bf{Proof of Proposition \ref{thmnew6}.}}]
The upper bound follows from Theorem \ref{emc}.
For the lower bound, we consider the following hypergraph: taking a set $A$ of order $s$ and a set $B$ of order $n-s$, and each hyperedge that contains at least one vertex of $A$. Clearly, this resulting hypergraph is $M_{s+1}^r$-free as $s$ vertices of $A$ are incident to all the hyperedges. Note that $M_{p}\subseteq G$ by the definition of $p(G)$, thus $M^r_{p}\subseteq G^r$. So this resulting hypergraph is $G^r$-free as $s+1\leq p$.
Therefore, $\ex_r(n,\{G^r,M^r_{s+1}\})=\sum_{i=1}^s\binom{s}{i}\binom{n-s}{r-i}$.
\end{proof}

Before we give the proof of Theorem~\ref{temi}, we introduce some definitions.
We say that a pair of vertices $u,v$ is \textit{heavy} if there are at least 
$(r-2)|E(G)|+|V(G)|$ hyperedges with pairwise intersection equal to $\{u,v\}$.
In particular, if we already embedded the expansion of a subgraph of $G$ and we want to embed another hyperedge of $G^r$ that corresponds to the edge $uv$, if $u,v$ is a heavy pair, we can do this greedily. Indeed, we have to avoid picking a hyperedge that contains any of the at most $(r-2)(|E(G)|-1)+|V(G)|$ other already embedded vertices. Observe also if $u,v$ is not heavy, then by deleting $u,v$ from the hyperedges that contain both $u$ and $v$, we obtain an $(r-2)$-graph with no matching of size $(r-2)|E(G)|+|V(G)|$, thus there are $O(n^{r-3})$ hyperedges containing both $u$ and $v$ by Theorem \ref{emc}.
Let $K(i)$ denote the complete $i$-partite graph with parts of order $3|E(G)|$.

Recall that for a graph $G$, $W$ is an independent set of vertices in $G$, $Z$ is the set of edges not incident to $W$, and $m(G)$ denote the smallest $|W|+|Z|$ for sets satisfying the above properties. 
Let us continue with the proof of Theorem \ref{temi}, which states that $\ex_r(n,\{G^r,M_{s+1}^r\})=(1+o(1))\min\{m(G)-1,s\}\binom{n}{r-1}$ for $\chi(G)=k<r$. 

\begin{proof}[{\bf Proof of Theorem \ref{temi}}]   The lower bound is given by $\cA(n,r,s)$ or by the following $r$-graph $\mathcal{G}_1$. We take a set $A$ of $m(G)-1$ vertices and a set $C$ of $n-m(G)+1$ vertices. We take as hyperedges of $\mathcal{G}_1$ the $r$-sets that contain a vertex from $A$ and $r-1$ vertices from $C$.

We claim that $\mathcal{G}_1$ is $G^r$-free. Assume otherwise and let $G$ denote a core of a $G^r$ in $\mathcal{G}_1$. Clearly $G$ intersects $A$ in an independent set, let us denote it by $W$. Let $Z$ denote the set of edges of $G$ of not incident to $W$, then each hyperedge corresponding to an edge in $Z$ contains a vertex of $A$ such that they are distinct from each other and not in $W$. This implies that $|A|\ge m(G)$, a contradiction.

In the case $s\le m(G)-1$, the upper bound follows from Theorem \ref{emc}, thus we can assume that $s\ge m(G)$. Let $H$ be an $n$-vertex $\{G^r,M_{s+1}^r\}$-free $r$-graph and $U=\{v_1,\dots,v_m\}$ be a set of minimum size with the property that each hyperedge contains a vertex from $U$. Then $|U|\le rs$. 

For each non-heavy pair of vertices $u,v$ with $u\in U$, $v\not\in U$, we remove each hyperedge containing both $u$ and $v$. There are $O(n)$ such pairs, thus we remove $O(n^{r-2})$ hyperedges this way. Let $H'$ be the resulting hypergraph. Let $H_i$ be the $(r-1)$-graph on $V(H)\setminus U$ with an $(r-1)$-set $Q$ being a hyperedge if and only if $Q$ with $v_i$ forms a hyperedge of $H'$, i.e., the link hypergraph of $v_i$ after deleting the hyperedges containing at least two vertices from $U$. Note that if a hyperedge is in $H_i$, then each vertex of that hyperedge forms a heavy pair with $v_i$.
Let $U(Q)$ denote the set of vertices in $U$ that have $Q$ in their link.
     
\begin{clm}
    For any set $U'$ with $|U'|\ge m(G)$, there are $o(n^{r-1})$ $(r-1)$-sets $Q$ with $U'=U(Q)$. 
\end{clm}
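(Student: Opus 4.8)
The plan is to prove the contrapositive by a supersaturation argument: if there were many such $Q$, we could assemble a copy of $G^r$ inside $H$, contradicting $G^r$-freeness. So suppose the claim fails, i.e. there is a constant $\varepsilon>0$ such that for arbitrarily large $n$ there are at least $\varepsilon n^{r-1}$ sets $Q$ with $U(Q)=U'$; fix such a large $n$. These sets form an $(r-1)$-uniform hypergraph $\cQ$ on $V(H)\setminus U$ with at least $\varepsilon n^{r-1}$ edges. By Erd\H{o}s's classical bound $\ex_{r-1}(n,\cdot)=o(n^{r-1})$ for the complete $(r-1)$-partite $(r-1)$-uniform hypergraph with parts of size $t$, for any fixed $t$ and $n$ large $\cQ$ contains a copy $\cK$ of that hypergraph, with parts $V_1,\dots,V_{r-1}$ each of size $t$ and every transversal $(r-1)$-set an edge; I would take $t=|V(G)|+(r-2)|E(G)|$.

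Next I would fix a pair $(W,Z)$ achieving $m(G)$, so $W\subseteq V(G)$ is independent, $Z$ is the set of edges of $G$ not meeting $W$, and $|W|+|Z|=m(G)\le|U'|$. Write $C=V(G)\setminus W$; then $Z$ is exactly the edge set of $G[C]$, so $\chi(G[C])\le\chi(G)=k\le r-1$. Now assign roles to the vertices of $G^r$: map $W$ injectively to distinct vertices of $U'$, and reserve a further distinct vertex $c_e\in U'$ for each $e\in Z$, which is possible as $|U'|\ge|W|+|Z|$. The core vertices $C$ and all expansion vertices will be embedded inside $\cK$. For each edge $e$ of $G$ I designate an $(r-1)$-set $Q_e$ of ``free'' vertices: if $e$ meets $W$ at $x_a$ with other endpoint $x_b\in C$, let $Q_e$ be $x_b$ together with the $r-2$ expansion vertices of $e$; if $e\in Z$ with endpoints $x_a,x_b\in C$, place one expansion vertex of $e$ at $c_e$ and let $Q_e$ consist of $x_a,x_b$ and the remaining $r-3$ expansion vertices. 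In both cases $|Q_e|=r-1$, and $Q_e\cup\{c\}$, with $c$ the image of $x_a\in W$ (resp. $c=c_e$), is precisely the hyperedge realizing $e$.

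The key point is that the pattern formed by the $Q_e$'s is properly $(r-1)$-partite. Taking a proper coloring of $G[C]$ with colors $\{1,\dots,r-1\}$ (available since $\chi(G[C])\le r-1$) colors every vertex of $C$; each $Q_e$ contains at most two core vertices, and when it contains two they are adjacent in $G[C]$ and hence differently colored, so the $r-2$ (resp. $r-3$) expansion vertices of $Q_e$ can be given the remaining colors, making each $Q_e$ rainbow. Sending color $j$ to part $V_j$ then embeds the free vertices injectively into $\cK$ (injective since $t$ exceeds the total number of free vertices) so that every $Q_e$ becomes a transversal, hence an edge of $\cQ$ with $U(Q_e)=U'\ni c$. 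Consequently $Q_e\cup\{c\}$ is a genuine hyperedge of $H$ for each edge $e$ of $G$; the cover vertices lie in $U'$ and the free vertices outside $U$, and within each group they were chosen distinct, so all vertices are distinct and together they form a copy of $G^r$ in $H$, a contradiction.

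The main obstacle, and the place where $k<r$ is used essentially, is the passage from ``$\cQ$ is dense'' to ``$\cQ$ contains the pattern of the $Q_e$'s'': a dense $(r-1)$-uniform hypergraph need not contain an arbitrary fixed configuration, and what rescues the argument is exactly that this pattern admits a rainbow $(r-1)$-coloring, which is available precisely because the core part $G[C]$ is $(r-1)$-colorable when $k\le r-1$. Combined with Erd\H{o}s's bound on the Tur\'an number of complete $(r-1)$-partite hypergraphs, this reduces the embedding to a routine greedy placement inside $\cK$, so no delicate estimates remain.
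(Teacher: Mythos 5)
Your proof is correct, and its skeleton matches the paper's: pass to the contrapositive, apply Erd\H{o}s's theorem \cite{Er2} on complete multipartite $(r-1)$-graphs to the dense family $\cQ$ of sets $Q$ with $U(Q)=U'$, fix a pair $(W,Z)$ attaining $m(G)$, embed $G-W$ into the resulting multipartite structure, and spend the $|U'|\ge |W|+|Z|$ vertices of $U'$ on the images of $W$ and on one extra vertex per edge of $Z$. The genuine difference is in how the edges incident to $W$ are realized. The paper finds a copy of $K(k)^{r-1}$ in $\tilde{H}=\cQ$, uses its core for $G-W$ and its hyperedges (extended by distinct vertices of $U'$) for the $Z$-edges, and then completes the $W$-incident edges by a greedy argument based on heavy pairs --- this is exactly why the Claim is preceded in the paper by the remark that every vertex of a hyperedge of $H_i$ forms a heavy pair with $v_i$, i.e., the paper exploits that $U(Q)$ is defined through the pruned hypergraph $H'$. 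You avoid heaviness altogether: you extract the full complete $(r-1)$-partite $(r-1)$-graph $\cK$, keep the expansion vertices of $W$-incident edges inside $\cK$, and realize each such edge as a rainbow transversal $Q_e$ extended by the image of its $W$-endpoint, invoking the property $U(Q_e)=U'$ a second time. Both routes rest on the same theorem of Erd\H{o}s and on the same use of $\chi(G-W)\le k\le r-1$ (your rainbow-coloring step and the paper's embedding of $K(k)^{r-1}$ into a complete $(r-1)$-partite hypergraph are the same fact in two guises); what your version buys is self-containedness: it works verbatim if $U(Q)$ is defined via links in $H$ rather than $H'$, so the Claim is decoupled from the heavy-pair preprocessing, which the ambient proof of Theorem~\ref{temi} still needs for its other counting steps, at no extra cost in the argument.
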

     
\begin{proof}[\bf Proof of Claim] 
Let $\tilde{H}$ denote the $(r-1)$-graph consisting of all $(r-1)$-sets $Q$ with $U(Q)=U'$. If $\tilde{H}$ is $K(k)^{r-1}$-free, then by a result of Erd\H os \cite{Er2}, $\tilde{H}$ has $o(n^{r-1})$ hyperedges, as observed in \cite{mubver}. Therefore, we can assume that there is a $K(k)^{r-1}$ in $\tilde{H}$. Consider $W$ and $Z$ from the definition of $m(G)$. Let $G'$ denote the graph we obtain from $G$ by deleting $W$, i.e., $E(G')=Z$. First we embed the at most $k$ color classes of a proper coloring of $G'$ into $K(k)$, each part to a different part of $K(k)$, arbitrarily. Then for each edge of $G'$ there is a hyperedge in $K(k)^{r-1}$ that extends it. 
Then we consider these hyperedges and for each such hyperedge, we choose greedily a new vertex from $U'$. This can be done as at most $m(G)-1$ vertices were chosen before. Now we embed the vertices of $W$ into the remaining part of $U'$ arbitrarily. Recall that we partitioned $V(G)$ into an independent set $W$ and $G'$, thus there are at least $m(G)-|Z|\ge |W|$ unused vertices in $U'$. For each such vertex $u\in U'$ and each vertex $v$ of $G'$ embedded into $K(k)$, the pair $u,v$ must be heavy, thus we can greedily extend the embedding to an embedding of $G^r$, a contradiction. 
\end{proof}

Let us return to the proof of the theorem. The number of hyperedges which intersect $U$ in $0$ or at least $2$ vertices is $O(n^{r-2})$. The number of hyperedges we deleted from $H$ to obtain $H'$ is $O(n^{r-2})$. We count the rest of the hyperedges according to the intersection $Q$ with $V(H)\setminus U$. For $Q$ with $|U(Q)|\le m(G)-1$, clearly there are at most $\binom{n}{r-1}$ ways to pick $Q$ and at most $m(G)-1$ ways to extend $Q$ to a hyperedge of $H$ with a vertex from $U$. For the other sets $Q$, there are $O(1)$ ways to choose $U(Q)$, and for any $U(Q)$ with $|U(Q)|\ge m(G)$ there are $o(n^{r-1})$ ways to choose the sets $Q$ by Claim~3.2. This completes the proof.
    \end{proof}

\section{Concluding remarks}

We have obtained sharp or nearly sharp bounds on $\ex_r(n,\{F,M_{s+1}^r\})$ for several hypergraphs~$F$. Let us discuss a missing case when $F=G^r$ for a graph $G$ with $\chi(G)=k\ge r$.

In the proof of Theorem \ref{temi}, we gave a construction $\mathcal{G}_1$ that was asymptotically optimal if $\chi(G)=k<r$.

In the case $k=r$, the situation is more complicated. It is possible that the edges in $Z$ form a graph of chromatic number $k-1$ (i.e., $W$ is a whole color class in a proper $k$-coloring of $F$). In this case the proof of Theorem \ref{temi} works in the same way and yields the same bound. However, it is possible that the edges in $Z$ form a graph of chromatic number $k$. In this case one can give another construction.

We consider a set of vertices $W$ and two sets of edges $X$ and $Y$ such that $W$ is an independent set not incident to any edge from $X\cup Y$, $X$ and $Y$ are disjoint, deleting $W$ and $X$ from $G$ results in a graph with chromatic number $k-1$, and deleting $W,X,Y$ results in a graph with chromatic number $k-2$. Let $Z$ be the set of edges not incident to $W$ that are not in $X\cup Y$. 
Let $f(W,X,Y,Z)=|X|\binom{n}{r-1}+|Y|\binom{k-1}{r-1}\left(\frac{n}{k-1}\right)^{r-1}+(|W|+|Z|-1)\binom{k-2}{r-1}\left(\frac{n}{k-2}\right)^{r-1}$. Let $h(G)$ denote the smallest possible value of $f(W,X,Y,Z)$ for sets $W,X,Y,Z$ satisfying the above properties, and let $w,x,y,z$ denote the sizes of those sets $W,X,Y,Z$, respectively.

Let $\mathcal{G}_1'$ be the following $r$-graph. We take a set $A$ of $w+z-1$ vertices, a set $B$ of $x$ vertices, a set $C$ of $y$ vertices and a set $D$ of $n-w-x-y-z+1$ vertices. We take on $D$ a complete $(k-2)$-partite $(r-1)$-graph $\mathcal{G}_0$ and a complete $(k-1)$-partite $(r-1)$-graph~$\mathcal{G}_0'$. We take as hyperedges of $\mathcal{G}_1'$ the $r$-sets that contain a vertex from $B$ and $r-1$ vertices from $D$, the $r$-sets that contain a vertex from $A$ and a hyperedge from $\mathcal{G}_0$ and the $r$-sets that contain a vertex from $C$ and a hyperedge from $\mathcal{G}_0'$.

\begin{prop}
$\mathcal{G}_1'$ is $G^r$-free.
\end{prop}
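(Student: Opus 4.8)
The plan is to mimic, in a weighted three-level form, the argument used for $\mathcal{G}_1$ in the proof of Theorem~\ref{temi}: assume a copy of $G^r$ exists, read off from it an admissible quadruple $(W',X',Y',Z')$, and show $f(W',X',Y',Z')<f(W,X,Y,Z)=h(G)$, contradicting the minimality defining $h(G)$. First I would fix a copy of $G^r$, i.e.\ an injective core map $\phi\colon V(G)\to V(\mathcal{G}_1')$ and, for each $uv\in E(G)$, a hyperedge $h_{uv}\supseteq\{\phi(u),\phi(v)\}$ carrying the $r-2$ expansion vertices of $uv$. The structural fact driving everything is that every hyperedge of $\mathcal{G}_1'$ has a unique \emph{root} in $A\cup B\cup C$ and its remaining $r-1$ vertices in $D$. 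Consequently two adjacent core vertices cannot both land in $A\cup B\cup C$, so $I:=\{u\in V(G):\phi(u)\notin D\}$ is independent in $G$ and every edge of $G$ has an endpoint in $V_D:=\phi^{-1}(D)$.

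Next I would extract the quadruple. Set $W':=I$. Every edge not incident to $W'$ lies wholly in $V_D$, hence its hyperedge is rooted at an \emph{expansion} vertex; sort these edges into $Z'$, $X'$, $Y'$ according as that root lies in $A$, $B$, or $C$. Pulling the two partitions of $D$ back along $\phi$ gives a $(k-1)$-coloring and a $(k-2)$-coloring of $V_D$; choosing the partitions nested, every $\mathcal{G}_0'$-hyperedge is transversal for the finer one and every $\mathcal{G}_0$-hyperedge for both, so the $Y'$- and $Z'$-edges are properly colored by the $(k-1)$-coloring and the $Z'$-edges by the $(k-2)$-coloring. This gives $\chi(G-W'-X')\le k-1$ and $\chi(G-W'-X'-Y')\le k-2$, so $(W',X',Y',Z')$ meets the defining requirements of an admissible quadruple.

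The final, and I expect hardest, step is the accounting. Since expansion vertices of distinct edges are distinct, the roots of the $Z'$-edges are $|Z'|$ distinct vertices of $A$, disjoint from the $|V_A|$ core images in $A$, whence $|A|\ge|V_A|+|Z'|$; likewise $|B|\ge|V_B|+|X'|$ and $|C|\ge|V_C|+|Y'|$. Substituting $|A|=w+z-1$, $|B|=x$, $|C|=y$ into $f(W',X',Y',Z')$ and invoking the strict ordering $\binom{n}{r-1}>\binom{k-1}{r-1}(n/(k-1))^{r-1}>\binom{k-2}{r-1}(n/(k-2))^{r-1}$ of the three weights (valid for large $n$), the lone $-1$ in the $A$-slot is designed to force $f(W',X',Y',Z')<h(G)$. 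The main obstacle is to push this comparison cleanly through the three different weights: core vertices diverted into $B$ or $C$ must be charged against the slack those slots then carry, and one must confirm the deficit created by the $-1$ is not absorbed by that bookkeeping. This is most delicate when $k=r$, where the bottom weight degenerates to $0$, so the strict saving has to come from the top two terms and the borderline configurations in which $(W',X',Y',Z')$ only ties $h(G)$ need to be ruled out separately; the point that the extracted quadruple can be adjusted to realize the exact chromatic numbers without inflating $f$ also belongs here.
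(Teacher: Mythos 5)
Your plan follows the same route as the paper's own proof: read an admissible quadruple off a hypothetical copy of $G^r$, using that every hyperedge of $\mathcal{G}_1'$ has exactly one vertex (its root) outside $D$, and then contradict the minimality defining $h(G)$. In two places you are actually more careful than the paper: you classify the edges of $G-W'$ by where their root lies, whereas the paper classifies by whether the $D$-part lies in $\mathcal{G}_0$, in $\mathcal{G}_0'$, or in neither (its claim that the hyperedges corresponding to $Y$ contain a vertex of $C$ is not literally true, since a hyperedge whose $D$-part is an edge of $\mathcal{G}_0'$ may be rooted at $B$); and you say explicitly that the two multipartitions of $D$ should be nested, which the coloring step silently requires.

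The genuine gap is that you defer exactly the case the proposition is about. The paper introduces $\mathcal{G}_1'$ in the case $k=r$, and there $\binom{k-2}{r-1}=0$: the third term of $f$ vanishes identically, $\mathcal{G}_0$ is the empty $(r-1)$-graph, and the vertices of $A$ lie in no hyperedge at all. So the ``$-1$ in the $A$-slot'' that your accounting is designed around buys nothing, your inequalities only give $f(W',X',Y',Z')\le h(G)$, and ruling out ties is not a loose end but the entire remaining content. (The paper's one-line assertion of the strict inequality is incomplete in the same way, so you have located a real soft spot, but you have not repaired it.) To close it: when $k=r$, no root and no core vertex having an incident edge can be placed in $A$, so $Z'=\emptyset$ and $E(G)=X'\sqcup Y'\,$; since both remaining weights $\binom{n}{r-1}$ and $\binom{k-1}{r-1}\left(\frac{n}{k-1}\right)^{r-1}$ are positive, a tie $f=h(G)$ forces $W'\cap(B\cup C)=\emptyset$, $|X'|=x$ and $|Y'|=y$, hence $x+y=|E(G)|$. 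Then in the defining optimal quadruple every edge of $G$ already lies in $X_0\cup Y_0$, so $Z_0=\emptyset$ and no edge meets $W_0$; choosing any vertex $u$ incident to an edge (one exists since $\chi(G)=k\ge 3$), adding $u$ to $W_0$ and deleting from $X_0\cup Y_0$ the edges incident to $u$ yields an admissible quadruple with strictly smaller $f$-value, contradicting the minimality of $h(G)$. Note that for this step, and for your final worry about exactness, one must read the chromatic-number conditions in the definition of admissibility as upper bounds; under the literal ``equals'' reading neither your extracted quadruple nor this modified one need be admissible, and the minimality argument cannot even be invoked.
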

\begin{proof}[\bf Proof]
Assume otherwise, and let $G$ denote a core of a $G^r$ in $\mathcal{G}_1'$. Clearly $G^r$ intersects $A\cup B\cup C$ in an independent set, let us denote it by $W'$. We can partition $W'$ into the set $W$ of vertices that belong to the core $G$ and the set $Z$ of other vertices. Then $G$ intersects $D$ in a graph of chromatic number at least $k-1$. Therefore, $G$ has to contain some edges such that the corresponding hyperedges of $G^r$ intersect $D$ in non-edges of $\mathcal{G}_0$. Among these edges, let $Y$ be the set of those edges such that the corresponding hyperedges of $G^r$ that intersect $D$ belong to $\mathcal{G}_0'$ and $X$ be the set of edges such that the corresponding hyperedges of $G^r$ that intersect $D$ belong to neither $\mathcal{G}_0$ nor $\mathcal{G}_0'$. Then the hyperedges of $G^r$ that correspond to $X$ each contain a vertex from $B$. The hyperedges of $G^r$ that correspond to $Y$ each contain a vertex from $C$. The hyperedges of $G^r$ that correspond to the other edges not incident to $W$ each contain a vertex from $Z$. Therefore, $|W'|\le w+z-1$, $|X|\le x$ and $|Y|\le y$.  Observe that $W,X,Y,Z$ satisfy the partition properties defined above but $f(W,X,Y,Z)<h(G)$, a contradiction.\
\end{proof}

Consider now the case $k>r$. Note that $\mathcal{G}_1$ is also $G^r$-free for $k>r$. 
We give another construction. Let $U$ be an independent set of vertices in $F$ and $Z$ be a set of edges such that deleting $U$ and $Z$ from $F$ results in a graph with chromatic number $k-2$. Let $m'(G)$ denote the smallest $|Z|$ for sets satisfying the above properties.

Let $\mathcal{G}_2$ be the following $r$-graph. We take a set $A$ of $m'(G)-1$ vertices, a set $B$ of $s-m'(G)+1$ vertices and a set $C$ of $n-s$ vertices. We take on $C$ a complete $(k-2)$-partite $(r-1)$-graph $\mathcal{G}_0$. We take as hyperedges of $\mathcal{G}_2$ the $r$-sets that contain a vertex from $A$ and $r-1$ vertices from $C$, and the $r$-sets that contain a vertex from $B$ and a hyperedge from $\mathcal{G}_0$.

\begin{prop}
$\mathcal{G}_2$ is $G^r$-free. 
\end{prop}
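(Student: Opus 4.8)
The plan is to argue by contradiction: assuming $\mathcal{G}_2$ contains a copy of $G^r$, I would extract from it an independent set $U$ and an edge set $Z$ with $\chi(G-U-Z)\le k-2$ and $|Z|\le m'(G)-1$, contradicting the minimality in the definition of $m'(G)$. The starting observation is structural: by construction every hyperedge of $\mathcal{G}_2$ meets $A\cup B$ in exactly one vertex (an $A$-vertex for the first type, a $B$-vertex for the second) and has its remaining $r-1$ vertices in $C$. Note also that $\mathcal{G}_0$ is well defined and nonempty precisely because $k>r$ forces $k-2\ge r-1$, so the $(k-2)$-partition of $C$ admits transversal $(r-1)$-sets, and that $A\cup B\cup C$ exhausts $V(\mathcal{G}_2)$.

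Let $G'$ be the core of the assumed copy of $G^r$, isomorphic to $G$, and for each edge $e=uv$ of $G'$ let $h_e$ denote the hyperedge of $\mathcal{G}_2$ realizing the expansion of $e$, so that $h_e$ consists of $u,v$ and the $r-2$ private expansion vertices of $e$. I would first set $U$ to be the set of core vertices mapped into $A\cup B$. Since each $h_e$ contains exactly one vertex of $A\cup B$, no edge $uv$ of $G'$ can have both endpoints in $A\cup B$; hence $U$ is independent in $G'$, and under the isomorphism it corresponds to an independent set of $G$. After deleting $U$, every remaining core vertex lies in $C$, and I would color such a vertex by the part of the $(k-2)$-partition of $\mathcal{G}_0$ containing it, obtaining a $(k-2)$-coloring of $G'-U$.

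The heart of the argument is to show this coloring is proper after deleting few edges. For an edge $uv$ of $G'-U$ (so $u,v\in C$), the unique $A\cup B$-vertex of $h_e$ must be one of the $r-2$ private expansion vertices of $e$. If that vertex lies in $B$, then $h_e$ is of the second type, so its $r-1$ vertices in $C$ form a hyperedge of $\mathcal{G}_0$, i.e.\ a transversal; in particular $u$ and $v$ lie in distinct parts and receive distinct colors. Thus the only possibly monochromatic edges of $G'-U$ are those whose distinguished expansion vertex lies in $A$; let $Z$ be this set of edges. Then the $(k-2)$-coloring is proper on $G'-U-Z$, so $\chi(G-U-Z)\le k-2$. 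Finally, the expansion vertices of distinct edges are pairwise distinct, and every edge of $Z$ contributes a distinct such vertex lying in $A$; hence $|Z|\le|A|=m'(G)-1$. Together with the independence of $U$ this contradicts the definition of $m'(G)$.

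The step I expect to be most delicate is the simultaneous control of the two quantities: one must verify that choosing $Z$ to be exactly the ``$A$-type'' monochromatic edges makes the partition-coloring proper on $G-U-Z$ (which relies on the transversal condition enforced by the $B$-hyperedges through $\mathcal{G}_0$) while keeping $|Z|$ bounded by $|A|$ (which relies on each edge of $Z$ privately consuming one of the $m'(G)-1$ vertices of $A$). The remaining bookkeeping — that an edge incident to $U$ is destroyed by deleting $U$, and that no hyperedge meets $A\cup B$ in two vertices — is routine.
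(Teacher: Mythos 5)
Your proof is correct and follows essentially the same route as the paper: take $U$ to be the core vertices in $A\cup B$ (independent since every hyperedge meets $A\cup B$ in exactly one vertex), note that core edges inside $C$ realized by hyperedges of the second type are properly colored by the $(k-2)$-partition, and bound the exceptional edges by $|A|=m'(G)-1$ using the fact that expansion vertices are private to their edges. The only cosmetic difference is that you let $Z$ be all ``$A$-type'' edges and contradict the minimality in the definition of $m'(G)$ directly, while the paper lets $Z$ be the edges whose hyperedges meet $C$ in non-edges of $\mathcal{G}_0$ and phrases the same counting as a forced intersection of two hyperedges outside the core.
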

\begin{proof}[\bf Proof]
Assume otherwise and let $G$ denote a core of a $G^r$ in $\mathcal{G}_2$. Clearly $G$ intersects $A\cup B$ in an independent set, let us denote it by $U$. Then $G$ intersects $C$ in a graph of chromatic number at least $k-1$. Therefore, $G$ has to contain some edges such that the corresponding hyperedges of $G^r$ intersect $C$ in non-edges of $\mathcal{G}_0$. Let $Z$ be the set of these edges. Then the corresponding hyperedges of $G^r$ each contain an element from $A$, thus there is an intersection outside the core, a contradiction.
\end{proof}

Observe that the size of $\mathcal{G}_2$ increases with $s$, and thus it is greater than that of $\mathcal{G}_1$ for $s$ sufficiently large. In this case we have a similar dichotomy as above: if we delete a whole color class, then the situation is much simpler. First we describe our construction if this is not the case.
Again, we consider a set of vertices $U$ and two sets of edges $X$ and $Y$ such that $U$ is an independent set not incident to any edge from $X\cup Y$, $X$ and $Y$ are disjoint, deleting $U$ and $X$ from $G$ results in a graph with chromatic number $k-1$, and deleting $U,X,Y$ results in a graph with chromatic number $k-2$. Let $f(U,X,Y)=|X|\binom{n}{r-1}+(|Y|-1)\binom{k-1}{r-1}\left(\frac{n}{k-1}\right)^{r-1}+(s-|X|-|Y|+1)\binom{k-2}{r-1}\left(\frac{n}{k-2}\right)^{r-1}$. Let $h'(G)$ denote the smallest possible value of $f(U,X,Y)$ for sets $U,X,Y$ satisfying the above properties, and $u,x,y$ denote the sizes of those sets $U,X,Y$, respectively. Assume that $s\ge x+y-1$.

Let $\mathcal{G}_2'$ be the following $r$-graph. We take a set $A$ of $x$ vertices, a set $B$ of $y-1$ vertices, a set $C$ of $s-x-y+1$ vertices and a set $D$ of $n-s$ vertices. We take on $D$ a complete $(k-2)$-partite $(r-1)$-graph $\mathcal{G}_0$ and a complete $(k-1)$-partite $(r-1)$-graph $\mathcal{G}_0'$. We take as hyperedges of $\mathcal{G}_2'$ the $r$-sets that contain a vertex from $A$ and $r-1$ vertices from $D$, the $r$-sets that contain a vertex from $B$ and a hyperedge from $\mathcal{G}_0'$ and the $r$-sets that contain a vertex from $C$ and a hyperedge from $\mathcal{G}_0$.

\begin{prop}
$\mathcal{G}_2'$ is $G^r$-free. 
\end{prop}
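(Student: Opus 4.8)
The plan is to argue by contradiction, following the template already used for $\mathcal{G}_1'$ and $\mathcal{G}_2$. Suppose $\mathcal{G}_2'$ contains a copy of $G^r$ under an embedding $\phi$, and let $G$ denote its core. The first step is the structural observation that, by construction, every hyperedge of $\mathcal{G}_2'$ contains exactly one vertex of $A\cup B\cup C$ and $r-1$ vertices of $D$. Hence the core vertices that land in $A\cup B\cup C$ form an independent set $U'$ of $G$: two adjacent core vertices there would force two special vertices into a single hyperedge. Moreover, every edge of $G$ not incident to $U'$ has both endpoints embedded into $D$, so its unique special vertex is an expansion vertex lying in $A\cup B\cup C$. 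Since $U'$ is independent and $\chi(G)=k$, we have $\chi(G-U')\ge k-1$.

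Next I would classify the edges of $G-U'$ by the location of their (expansion) special vertex: let $X'$ collect those with special vertex in $A$, let $Y'$ collect those with special vertex in $B$, and leave the rest, whose special vertex lies in $C$. Distinct edges of $G^r$ have disjoint expansion vertices and $\phi$ is injective, so these special vertices are pairwise distinct; consequently $|X'|\le|A|=x$ and $|Y'|\le|B|=y-1$, where $x,y$ are the sizes of the minimizing triple used to build $\mathcal{G}_2'$. For a $C$-type edge the $(r-1)$-set in $D$ is a hyperedge of $\mathcal{G}_0$, so its two core endpoints occupy distinct classes of the $(k-2)$-partition; for a $B$-type edge the $(r-1)$-set is a hyperedge of $\mathcal{G}_0'$, so its endpoints occupy distinct classes of the $(k-1)$-partition.

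Pulling these two partitions of $D$ back along $\phi$ colours $V(G-U')$. Colouring by the $(k-2)$-partition makes every $C$-type edge proper, so $\chi(G-U'-X'-Y')\le k-2$; colouring by the $(k-1)$-partition makes every $B$-type edge proper, and, provided the two partitions are chosen compatibly so that the $(k-1)$-partition refines the $(k-2)$-partition (equivalently $\mathcal{G}_0\subseteq\mathcal{G}_0'$), every $C$-type edge stays proper as well, giving $\chi(G-U'-X')\le k-1$. As $U'$ is independent and disjoint from $X'\cup Y'$ and $X',Y'$ are disjoint, the triple $(U',X',Y')$ is feasible for the definition of $h'(G)$ (after a routine adjustment so that the two chromatic numbers are exactly $k-1$ and $k-2$), so $f(U',X',Y')\ge h'(G)$. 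On the other hand, viewing $f$ as a function of the sizes $|X'|,|Y'|$ alone, its partial derivatives in each variable are positive for large $n$, since $\binom{n}{r-1}$ dominates both $\binom{k-1}{r-1}(n/(k-1))^{r-1}$ and $\binom{k-2}{r-1}(n/(k-2))^{r-1}$. Monotonicity together with $|X'|\le x$ and $|Y'|\le y-1<y$ then forces the value of $f$ at $(U',X',Y')$ to be at most its value at sizes $(x,y-1)$, which is strictly below its value at sizes $(x,y)$, namely $h'(G)$. This contradicts $f(U',X',Y')\ge h'(G)$, completing the proof.

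The hard part will be the colouring step that produces $\chi(G-U'-X')\le k-1$: it forces the two multipartite templates on $D$ to use nested partitions, so that an edge proper for the coarse $(k-2)$-colouring remains proper for the finer $(k-1)$-colouring, and one must check that this compatible choice leaves the hyperedge counts underlying $f$ unchanged. The second delicate point is that the contradiction is \emph{strict} only because the construction provisions $B$ with one vertex fewer than the optimum $y$; the whole argument hinges on $f$ being genuinely increasing in $|Y'|$, so that this single missing slot drives $f$ strictly below $h'(G)$.
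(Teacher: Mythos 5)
Your proof is correct and takes essentially the same route as the paper's: both argue by contradiction, observe that the core meets $A\cup B\cup C$ in an independent set, extract edge sets $X,Y$ satisfying $|X|\le x$ and $|Y|\le y-1$ from the distinctness of expansion vertices, check that the resulting triple is feasible in the definition of $h'(G)$, and contradict the minimality of $h'(G)$ via monotonicity of $f$; your classification by the location of the special vertex (rather than by whether the trace on $D$ lies in $\mathcal{G}_0$ or $\mathcal{G}_0'$, as in the paper) is a cosmetic variant that if anything makes the size bounds cleaner, and the nestedness requirement $\mathcal{G}_0\subseteq\mathcal{G}_0'$ you flag is indeed tacitly needed in the paper's coloring step as well. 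One small repair: positivity of the $|Y|$-coefficient of $f$ does not follow from $\binom{n}{r-1}$ dominating the other two terms, but from the inequality $\binom{k-1}{r-1}\left(\frac{n}{k-1}\right)^{r-1}>\binom{k-2}{r-1}\left(\frac{n}{k-2}\right)^{r-1}$, i.e., the standard fact that balanced complete multipartite $(r-1)$-graphs gain edges as the number of parts grows.
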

\begin{proof}[\bf Proof]
Assume otherwise and let $G$ denote a core of a $G^r$ in $\mathcal{G}_2'$. Clearly $G$ intersects $A\cup B\cup C$ in an independent set, let us denote it by $U$. Then $G$ intersects $D$ in a graph of chromatic number at least $k-1$. Therefore, $G$ has to contain some edges such that the corresponding hyperedges of $G^r$ intersect $D$ in non-edges of $\mathcal{G}_0$. Among these edges, let $Y$ be the set of those edges such that the corresponding hyperedges of $G^r$ that intersect $D$ belong to $\mathcal{G}_0'$ and let $X$ be the set of edges such that the corresponding hyperedges of $G^r$ that intersect $D$ belong to neither $\mathcal{G}_0$ nor $\mathcal{G}_0'$. Then the hyperedges of $G^r$ that correspond to $X$ each contain a vertex from $A$. The hyperedges of $G^r$ that correspond to $Y$ each contain a vertex from $B$. Therefore, $|X|\le x$ and $|Y|\le y-1$. Observe that $U,X,Y$ satisfy the partition properties defined above, but $f(U,X,Y)<h'(G)$, a contradiction.
\end{proof}

The simplest case seems to be when deleting $U$ results in a $(k-1)$-colorable graph, and then $Z$ consists of the edges between two color classes. In this case, we do not have to worry about $\mathcal{G}_2'$, but $\mathcal{G}_1$ still might be larger than $\mathcal{G}_2$. To avoid this complication, we may assume that $s$ is sufficiently large. In this case we believe that $\mathcal{G}_2$ gives the asymptotically correct value of $\ex(n,\{G^r,M_{s+1}^r\})$. 

\begin{conjecture}\label{bla} Let $\chi(G)=k>r$ and assume that there is an independent set $U$ of $G$ such that deleting $U$ from $G$ results in a graph $G_0$ of chromatic number $k-1$, and there are two color classes of $G_0$ such that there are $m'(G)$ edges between them. If $s$ is sufficiently large, then
    \[\ex(n,\{G^r,M_{s+1}^r\})=(m'(G)-1)\binom{n}{r-1}+(s-m'(G)+1)\binom{k-2}{r-1}\left(\frac{n}{k-2}\right)^{r-1}+o(n^{r-1}).\]
\end{conjecture}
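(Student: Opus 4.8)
The lower bound is already in hand. The $r$-graph $\mathcal{G}_2$ is $G^r$-free by the proposition above, and counting its hyperedges gives $(m'(G)-1)\binom{n-s}{r-1}$ edges through $A$ together with $(s-m'(G)+1)\,|E(\mathcal{G}_0)|$ edges through $B$, where $|E(\mathcal{G}_0)|=\binom{k-2}{r-1}(\tfrac{n}{k-2})^{r-1}(1+o(1))$. Hence $|E(\mathcal{G}_2)|$ equals the conjectured quantity up to $o(n^{r-1})$, and the whole problem is the matching upper bound.

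For the upper bound let $H$ be an $n$-vertex $\{G^r,M_{s+1}^r\}$-free $r$-graph. First I would reduce to boundedly many heavy vertices. Fix a minimum transversal $U$ of the hyperedges of $H$; since $H$ is $M_{s+1}^r$-free we have $|U|\le rs$. Arguing exactly as in the proof of Theorem~\ref{thmnew1}, a greedy matching argument shows that at most $s$ vertices of $U$ have degree exceeding $rs\binom{n-2}{r-2}$; write $U_{\mathrm{high}}$ for this set, so $|U_{\mathrm{high}}|\le s$. The hyperedges meeting $U$ in at least two vertices, or passing through a vertex of $U\setminus U_{\mathrm{high}}$, number $O(n^{r-2})$ in total, so $|E(H)|=\sum_{v\in U_{\mathrm{high}}}|E(H_v)|+O(n^{r-2})$, where $H_v$ denotes the link $(r-1)$-graph of $v$ on $V(H)\setminus U$ formed by the hyperedges meeting $U$ only in $v$. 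As in the proof of Theorem~\ref{temi} I would then discard the $O(n^{r-2})$ hyperedges arising from non-heavy pairs, so that in each $H_v$ every vertex forms a heavy pair with $v$.

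Next I would separate $U_{\mathrm{high}}$ according to link density. Fix a small $\varepsilon>0$ and call $v$ \emph{rich} if $|E(H_v)|>\binom{k-2}{r-1}(\tfrac{n}{k-2})^{r-1}+\varepsilon n^{r-1}$ and \emph{poor} otherwise. By the expansion Tur\'an theorem of Mubayi, as recorded in~\cite{mubver}, the complete $(k-2)$-partite $(r-1)$-graph is asymptotically the largest $K(k-1)^{r-1}$-free $(r-1)$-graph (here $k-1>r-1$ because $k>r$), so $\ex_{r-1}(n,K(k-1)^{r-1})=\binom{k-2}{r-1}(\tfrac{n}{k-2})^{r-1}+o(n^{r-1})$; in particular every rich link already contains a copy of $K(k-1)^{r-1}$. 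The crux is the claim that there are at most $m'(G)-1$ rich vertices. Granting it, each of the at most $m'(G)-1$ rich vertices contributes at most $\binom{n}{r-1}$ and each of the remaining (poor) vertices at most $\binom{k-2}{r-1}(\tfrac{n}{k-2})^{r-1}+\varepsilon n^{r-1}$; since the rich bound exceeds the poor bound, the sum over the at most $s$ vertices of $U_{\mathrm{high}}$ is largest when exactly $m'(G)-1$ are rich and $s-m'(G)+1$ are poor (legitimate as $m'(G)-1<s$ for $s$ large), which yields $(m'(G)-1)\binom{n}{r-1}+(s-m'(G)+1)\binom{k-2}{r-1}(\tfrac{n}{k-2})^{r-1}+o(n^{r-1})$ once $\varepsilon\to 0$. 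This is exactly the lower bound.

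The heart of the argument, and the reason the statement is only conjectural, is the bound of $m'(G)-1$ on the number of rich vertices. I would try to prove its contrapositive: $m'(G)$ rich vertices $v_1,\dots,v_{m'(G)}$ already force a copy of $G^r$. Fixing the independent set $U_0$ and the set $Z$ of $m'(G)$ edges between two colour classes that witness $\chi(G-U_0-Z)\le k-2$, the plan is to embed the $(k-2)$-colourable graph $G-U_0-Z$ and the class $U_0$ into a $(k-1)$-partite canvas inside $V(H)\setminus U$ supplied by the copies of $K(k-1)^{r-1}$ in the rich links, and then to use $v_1,\dots,v_{m'(G)}$ as the distinct expansion vertices routing the $m'(G)$ edges of $Z$, extending every hyperedge greedily through heavy pairs. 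The obstacle is the word \emph{common}: a single rich link only places a $K(k-1)^{r-1}$ somewhere in the bulk, and knowing merely that each link contains such a copy is genuinely insufficient, since a suitable disjoint union of structures can produce many vertices whose links contain $K(k-1)^{r-1}$ while still avoiding $G^r$. What should save the argument is the $\varepsilon n^{r-1}$ surplus carried by each rich link: by supersaturation this surplus forces not a single copy but copies of $K(k-1)^{r-1}$ spread throughout the bulk, and I expect a stability/supersaturation argument to extract a single globally aligned $(k-1)$-partite region common to all $m'(G)$ rich links, into which the one core $G$ can be embedded with all $v_i$ serving simultaneously as expansion vertices for $Z$. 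Converting the density surplus into this shared canvas is the step I expect to be hardest, and it is precisely what keeps the bound at the level of a conjecture.
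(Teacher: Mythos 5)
The statement you are addressing is Conjecture~\ref{bla}: the paper offers \emph{no proof} of it, only the lower-bound construction $\mathcal{G}_2$ together with a proof that $\mathcal{G}_2$ is $G^r$-free. Your lower bound reproduces exactly that: the construction, its $M_{s+1}^r$-freeness, and the edge count are all correct and agree with the paper. So everything hinges on your upper bound, and there the proposal is not a proof --- as you yourself concede, the claim that at most $m'(G)-1$ vertices are rich is left unproven. The situation is worse than ``hard but probably true,'' however: \textbf{that intermediate claim is false as stated}, so the rich/poor dichotomy cannot be the route to the conjecture.

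Concretely, take $G=K_k$ with $k>r$. Then $m'(K_k)=1$ (delete one vertex and one edge of the remaining $K_{k-1}$ to get chromatic number $k-2$), and the hypothesis of Conjecture~\ref{bla} holds. Let $H^*$ be the $r$-graph with one special vertex $v_1$, the remaining $n-1$ vertices split into $k-1$ balanced parts, and hyperedges all sets $\{v_1\}\cup Q$ where $Q$ is an $(r-1)$-set meeting each part at most once (possible since $r-1<k-1$). Every hyperedge contains $v_1$, so $H^*$ is $M_2^r$-free, hence $M_{s+1}^r$-free. It is also $K_k^r$-free: distinct hyperedges of an expansion may intersect only in core vertices, so if $v_1$ were not a core vertex, two of the $\binom{k}{2}\ge 2$ expanded edges would illegally share $v_1$, while if $v_1$ were a core vertex, the $\binom{k-1}{2}\ge 1$ core edges avoiding $v_1$ would need hyperedges avoiding $v_1$, which do not exist. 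Yet $v_1$ is rich in your sense: its link is the complete $(k-1)$-partite $(r-1)$-graph, whose size $\binom{k-1}{r-1}\left(\frac{n}{k-1}\right)^{r-1}$ exceeds $\binom{k-2}{r-1}\left(\frac{n}{k-2}\right)^{r-1}+\varepsilon n^{r-1}$ for small $\varepsilon$. So $H^*$ has one rich vertex while your claim allows $m'(G)-1=0$. The defect is that ``rich'' lumps together vertices whose links have complete $(k-1)$-partite density ($\approx\binom{k-1}{r-1}(\frac{n}{k-1})^{r-1}$ edges) with vertices whose links are nearly complete ($\approx\binom{n}{r-1}$ edges); these must enter the count with different weights. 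This three-way trade-off (full links, $(k-1)$-partite links, $(k-2)$-partite links) is precisely what the paper's function $f(U,X,Y)$, the quantity $h'(G)$, and the competing construction $\mathcal{G}_2'$ are designed to track, and proving that under the conjecture's hypothesis the optimum of this weighted trade-off is attained by $\mathcal{G}_2$ is the genuinely open problem. No supersaturation or alignment argument can rescue the binary dichotomy, since the counterexample above is already perfectly aligned.
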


\bigskip
\textbf{Funding}: The research of Gerbner is supported by the National Research, Development and Innovation Office - NKFIH under the grants FK 132060 and KKP-133819. The research of Tompkins is supported by the National Research, Development and Innovation Office - NKFIH under the grant K135800. The research of Zhou is supported by the China Scholarship Council (No. 202406890088).

\end{document}